\documentclass{amsart}
\usepackage{appendix}
\usepackage{tabularx}
\usepackage[numbers,sort&compress]{natbib}
\usepackage{color}
\usepackage[colorlinks=true,linkcolor=blue,citecolor=blue,urlcolor=blue]{hyperref}
\newtheorem{theorem}{Theorem}[section]
\newtheorem{lemma}[theorem]{Lemma}
\newtheorem{corollary}[theorem]{Corollary}

\theoremstyle{definition}

\theoremstyle{remark}
\newtheorem{remark}[theorem]{Remark}

\newcommand{\Rm}{\operatorname{Rm}}
\newcommand{\Ric}{\operatorname{Ric}}
\newcommand{\Vol}{\operatorname{Vol}}

\newcommand{\dd}{\,\mathrm{d}}

\newcommand{\cK}{\mathcal{K}}
\newcommand{\cM}{\mathcal{M}}

\numberwithin{equation}{section}
\allowdisplaybreaks[4]

\begin{document}

\title[Long-time existence of the Laplacian flow ]{Long-time existence of the Laplacian flow with Weyl tensor}


\author{Chuanhuan Li}
\address{School of Mathematical Sciences, Laboratory of Mathematics and Complex Systems, Beijing Normal University, Beijing 100875, China \newline
${\quad}$Shanghai Institute for Mathematics and Interdisciplinary Sciences, Shanghai 200433, China}
\curraddr{}
\email{chli@simis.cn}
\thanks{}

\author{Yi Li}
\address{Shanghai Institute for Mathematics and Interdisciplinary Sciences, Shanghai 200433, China \newline
${\quad}$ Fudan University, Shanghai 200433, China}
\curraddr{}
\email{yilicms@simis.cn, yilicms@gmail.com}
\thanks{}

\author{Kai Zheng}
\address{University of Chinese Academy of Sciences, Beijing 100190, China}
\curraddr{}
\email{KaiZheng@amss.ac.cn}
\thanks{}

\subjclass[2020]{Primary {53E99; 53C25}}

\keywords{}

\date{}

\dedicatory{}

\begin{abstract}
We establish integral extension criteria for the Laplacian flow
of closed $G_2$-structures on compact $7$-dimensional manifolds $M$.
Assuming a uniform lower bound for the scalar curvature, we prove
that a finite-time solution extends smoothly whenever the spatial
$L^p$ norm of the $77$-dimensional Weyl component $W_{77}$ remains
uniformly bounded for some $p\ge 7/2$.
We also obtain a mixed space--time extension criterion under the
condition
\[
W_{77}\in L^q(0,T;L^p(M,g(t))),
\qquad
\frac{7}{2}<p<\infty,\quad
1\le q<\infty,\quad
\frac{1}{q}+\frac{7}{p}\le 2.
\]
In particular, finite space-time $L^4$ integrability of $W_{77}$
is sufficient for extension.

{{\bf Keywords}: Laplacian flow,  Weyl tensor, extension criteria, mixed space-time norms}
\end{abstract}

\maketitle

\section{Introduction}

In order to study a smooth $7$-manifold $M$ admitting a {torsion-free} $G_{2}$-structure, Bryant \cite{Bry06} introduced the following Laplacian $G_2$ flow for closed $ G_{2}$-structure:
\begin{equation}
  \left \{
       \begin{array}{rl}
          \partial_{ t}\varphi(t)&=\Delta_{\varphi(t)}\varphi(t),\\
           d\varphi(t)&=0,\\
           \varphi(0)&=\varphi,
       \end{array}
  \right.
  \label{eq:flow}
\end{equation}
where $\Delta_{\varphi(t)}=dd^{\ast}_{\varphi(t)}+d^{\ast}_{\varphi(t)}d$ is the Hodge Laplacian of $g(t)$ and $\varphi$ is an initial  closed $G_{2}$-structure({a positive closed three-form).} Here $g(t)$ is the Riemannian metric {algebraically determined by} $\varphi(t)$. Since $\Delta_{\varphi}\varphi=dd^{\ast}_{\varphi}\varphi$ for a closed $G_{2}$-structure $\varphi$, we see that the closedness of $\varphi(t)$ is preserved along the Laplacian flow $\eqref{eq:flow}$. 

The flow $\eqref{eq:flow}$ can be viewed as the gradient flow for the Hitchin functional when the variations are restricted to the cohomology class of the closed $G_{2}$-structure, where Hitchin functional was introduced by Hitchin \cite{Hit00}
$$\mathcal{H}:[\overline{\varphi}]_{+}\longrightarrow\mathbb{R}^{+},\ \varphi\longmapsto\frac{1}{7}\int_{M}\varphi\wedge\psi=\int_{M}\ast_{\varphi}1.$$
Here $\overline{\varphi}$ is a closed $G_{2}$-structure on $M$ and  $[\overline{\varphi}]_{+}$ is the open subset of the cohomology class $[\overline{\varphi}]$ consisting of $G_{2}$-structures. Any critical point of $\mathcal{H}$ gives rise to a torsion-free $G_{2}$-structure {when $7$-manifold $M$ is compact.} 

The stationary points of the Laplacian flow $\eqref{eq:flow}$ are harmonic $\varphi$, which on a compact manifold are the torsion-free $G_{2}$-structure. Hence the long-time behaviour of the flow is an important topic. In \cite{BX11}, applying De Turck's trick and Hamilton's Nash-Moser inverse function theorem, Bryant and Xu proved the short-time existence for $\eqref{eq:flow}$. In 2017, Lotay and Wei \cite{LW17} proved the long-time existence for the Laplacian $G_{2}$ flow $\eqref{eq:flow}$ under bounded Riemann curvature or Ricci curvature. In 2021, the second author \cite{local curvature of G2} gave {an alternative proof via local curvature estimates, showing that the flow persists as long as the Ricci curvature remains bounded.}

${}$

Lotay and Wei \cite{LW17} propose the following question (see also \cite{LL26A,Lot20}):
\begin{quote}
    {\bf Question:} {\it whether the Laplacian $G_{2}$ flow on closed $G_{2}$-structures will exist as long as the torsion tensor or scalar curvature remains uniformly bounded.}
\end{quote}
Fine and Yao \cite{FY18} obtained the long-time existence of the Laplacian flow $\eqref{eq:flow}$ with bounded scalar curvature when $M=X\times \mathbb{T}^{3}$, where $X$ is a $4$-manifold with hypersymplectic structure and $\mathbb{T}^{3}$ is the standard flat 3-torus. Let $B \subset \mathbb{R}^3$ be a bounded domain with smooth boundary, Lambert and Lotay \cite{LL21} prove the long-time existence on $B\times \mathbb{T}^{4}$ without bounded scalar curvature assumptions. Picard and Suan \cite{PS24} also proved that the Laplacian $G_{2}$ flow $\eqref{eq:flow}$ exists for all time under the suitable initial $\varphi_{0}$ when $M=\mathbb{T}^{3}\times CY^{2}$ or $M=\mathbb{S}^{1}\times CY^{3}$ with $CY^{2},CY^{3}$ are compact Calabi–Yau $2,3$-folds.

In \cite{LL26}, the authors derive a pinching estimate for the traceless Ricci curvature in terms of scalar curvature and the $C^{1}$  norm of the Weyl tensor under the Laplacian flow for closed $G_{2}$ structures. Using this, they prove the long-time existence under uniformly bounded scalar curvature $R(g(t))$ and the $C^{1}$  norm of Weyl tensor  $W(g(t))$.

In fact, Cleyton and Ivanov \cite{CI08} showed that the  Weyl tensor can be decomposed into the following three parts by $G_{2}$ structure:(see also \cite{DGK25} for details)
$$W=W_{77}+W_{64}+W_{27}.$$
Thus, it is worth investigating which part, together with the scalar curvature, determines the extension of the Laplacian flow.

In this paper, we study the extension of the Laplacian flow with bounded integral of Weyl tensor, with particular emphasis on the component $W_{77}$. We give the following result:
\begin{theorem}\label{thm1.1}
Let $(M^7,\varphi(t))_{t\in[0,T)}$ be a solution of
\eqref{eq:flow} on a compact manifold, where $0<T<\infty$. For some $p\geq 7/2$,  assume
that the scalar curvature and the $L^{p}$ norm of $W_{77}$ satisfy
$$
  \sup_{M\times[0,T)}(-R)\leq K<\infty,
$$
and
$$
  \sup_{0\leq t<T}\int_M |W_{77}(t)|^p\,\dd V_{g(t)}\leq A<\infty.
$$
Then $\varphi(t)$ extends smoothly past time $T$.
\end{theorem}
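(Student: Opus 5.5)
We argue by contradiction and blow-up, reducing the statement to the Ricci-curvature extension criterion of Lotay--Wei \cite{LW17} (see also \cite{local curvature of G2}). So suppose the flow cannot be extended past $T$. Then $\sup_M|\Rm|(t)$ is unbounded as $t\to T$, and in fact $\sup_M|\Ric|$ is unbounded as well.

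\textbf{Step 1: pointwise algebraic facts for closed $G_2$-structures.} For a closed $G_2$-structure the full torsion is $T=-\tfrac12\tau$, and the scalar curvature is
\[
R=-|T|^2=-\tfrac12|\tau|^2\le 0 .
\]
The hypothesis $-R\le K$ therefore bounds the torsion uniformly, $|\tau|^2\le 2K$. The Ricci tensor is determined by $\tau$, $\nabla\tau$ and quadratic torsion terms, $\Ric=\tfrac14\nabla\tau+O(|\tau|^2)$ schematically. Using the Cleyton--Ivanov decomposition \cite{CI08}, the components $W_{64}$ and $W_{27}$ of the Weyl tensor are likewise expressible through $\nabla T$ and $T*T$, since they lie in representations fed by the torsion via the $G_2$ Bianchi identity. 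Hence
\[
|\Rm|\le |W_{77}|+C\bigl(|\nabla T|+|T|^2\bigr)\le |W_{77}|+C|\nabla T|+CK .
\]
The first thing to verify is that $W_{77}$ is exactly the part of $\Rm$ not controlled by $\nabla T$. I would check this against \cite{DGK25}.

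\textbf{Step 2: blow-up.} Choose $(x_i,t_i)$ with $t_i\to T$ and $Q_i=|\Rm|(x_i,t_i)$ nearly maximal on $M\times[0,t_i]$, and rescale $g_i(t)=Q_i g(t_i+t/Q_i)$, $\varphi_i=Q_i^{3/2}\varphi(\cdot)$. Lotay--Wei's Shi-type estimates and compactness theorem for the Laplacian flow give derivative bounds. Noncollapsing comes from a $\kappa$-noncollapsing argument: the flow is of Ricci-flow type $\partial_t g=-2\Ric+O(|T|^2)$ and the torsion is bounded, so a Perelman-type argument should apply. I would try this first; otherwise I would cite the compactness theory of \cite{LW17}. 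A subsequence then converges to a complete limit flow $(M_\infty,\varphi_\infty(t))$, $t\in(-\infty,0]$, with $|\Rm_\infty|\le 1$ and $|\Rm_\infty|(x_\infty,0)=1$. The torsion scales as $|T_i|^2\le K/Q_i\to0$, so the limit is torsion-free, hence Ricci-flat with holonomy in $G_2$, and its Weyl tensor equals $W_{77}$ (indeed $\Rm_\infty=W_{77,\infty}$).

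\textbf{Step 3: the scaling contradiction, which is the main obstacle.} The integral $\int|W_{77}|^p\,dV$ scales by $Q^{p-7/2}$ under $g\mapsto Qg$. For $p>7/2$ the rescaled integrals tend to $0$, so $W_{77,\infty}\equiv0$, hence $\Rm_\infty\equiv0$. This contradicts $|\Rm_\infty|(x_\infty,0)=1$, with local smooth convergence giving the lower semicontinuity needed.

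The critical case $p=7/2$ is the hard part, because the integral is scale-invariant. There I would use a local concentration argument. The bound $\int|W_{77}|^{7/2}\le A$ only allows finitely many concentration points. On the Ricci-flat limit the curvature satisfies $\Delta\Rm=\Rm*\Rm$. An $\varepsilon$-regularity estimate then gives: if the $L^{7/2}$ norm of the curvature is small on a unit ball, the curvature is bounded on the half-ball. Combined with an absolute-continuity choice of the blow-up point and scale (Anderson-style point picking, so that the energy on unit balls of the rescaled flow is small), this yields the contradiction.

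The genuine technical difficulties are therefore the noncollapsing and the $\varepsilon$-regularity for $p=7/2$. This $\varepsilon$-regularity must be done along the flow rather than on the limit, using a Moser iteration for $|\Rm|$ with the torsion terms treated as lower order thanks to Step 1.
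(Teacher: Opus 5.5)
For $p>7/2$ your argument is the paper's (Theorem \ref{3.4}). You blow up at near-maximal curvature and pass to a limit using Shi-type estimates, noncollapsing and Lotay--Wei compactness. Then $|\mathbf{T}_i|^2\le K/Q_i$ forces $\mathbf{T}_\infty=0$ and hence $\Rm_\infty=W_{77,\infty}$. You read off $\nabla\mathbf{T}_\infty=0$ from smooth convergence, whereas the paper uses the interpolation Lemma \ref{3.2}; both are fine. Finally the factor $Q_i^{7/2-p}$ kills $W_{77,\infty}$. One correction: your fallback of citing the compactness theory of \cite{LW17} cannot replace the noncollapsing estimate, because that theorem takes an injectivity radius bound as a hypothesis. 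You need the Perelman/Chen-type statement of Lemma \ref{noncollapse}, which uses $R\le 0$ and $|\mathbf{T}|^2\le K$, followed by Cheeger--Gromov--Taylor.

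The critical case $p=7/2$ has a genuine gap: the $\varepsilon$-regularity/point-picking scheme cannot produce a contradiction. At the blow-up point you have $|\Rm_\infty|(x_\infty,0)=1$, $|\nabla\Rm_\infty|\le C_1$ and $\Vol B(x_\infty,r)\ge\kappa r^7$. Hence $|\Rm_\infty|\ge\frac12$ on $B(x_\infty,1/(2C_1))$, and so $\int_{B(x_\infty,1)}|\Rm_\infty|^{7/2}\ge c(\kappa,C_1)>0$. The same lower bound holds for $\int|W_{77}|^{7/2}$ near $x_i$ on the rescaled flows for large $i$. So no choice of point and scale makes the energy on unit balls small where the curvature is of unit size. ``Only finitely many concentration points'' says precisely that finitely many bubbles may form; it is not a contradiction. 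Your Step 3 uses nothing beyond Ricci-flatness, noncollapsing and a critical energy bound, and in that generality the conclusion is false. In dimension $4$ the Eguchi--Hanson metric is Ricci-flat, has Euclidean volume growth and finite $L^2$ curvature, and is not flat.

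What is missing is a global rigidity statement for the limit. A complete torsion-free $G_2$-manifold that is $\kappa$-noncollapsed at \emph{all} scales and satisfies $\int_{M_\infty}|\Rm_\infty|^{7/2}\le A$ must be flat. The all-scales noncollapsing holds because the rescaled noncollapsing radius is $\rho_0\sqrt{Q_i}\to\infty$, and the energy bound follows from scale invariance plus exhaustion. The paper (Theorem \ref{3.6}) proves flatness by blowing down, $g_j=a_j^{-2}g_\infty$ with $a_j\to\infty$. Noncollapsing and energy are scale invariant, so the compactness result for torsion-free $G_2$-manifolds with locally bounded $L^{7/2}$ curvature (Lemma \ref{thm:LMZ-special}, from \cite{LMZ26}) gives pointed smooth subconvergence. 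This is incompatible with $|\Rm(g_j)|_{g_j}(x)=a_j^2|\Rm_\infty|(x)\to\infty$ unless $\Rm_\infty(x)=0$.

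An alternative that makes the dimension dependence explicit goes through Bando--Kasue--Nakajima. The limit is ALE with end modelled on $\mathbb{R}^7/\Gamma$, where $\Gamma\subset SO(7)$ acts freely on $S^6$. Every element of $SO(7)$ has eigenvalue $1$, so $\Gamma$ is trivial and the asymptotic volume ratio is Euclidean. Bishop--Gromov rigidity then gives $M_\infty\cong\mathbb{R}^7$, contradicting $|\Rm_\infty|(x_\infty,0)=1$.
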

Besides, we also prove the long-time existence under bounded scalar curvature and mixed spacetime norm of $W_{77}$.
\begin{theorem}\label{thm1.2}
Let $(M^7,\varphi(t))_{t\in[0,T)}$ be a solution of
\eqref{eq:flow} on a compact manifold, where $0<T<\infty$. 
Let $\frac{7}{2}< p<\infty$ and $1\leq q<\infty$ satisfy
$$\frac{1}{q}+\frac{7}{p}\leq 2.$$
Suppose that the scalar curvature and $W_{77}$ satisfy
$$
  \sup_{M\times[0,T)}(-R)\leq K<\infty,
$$
and  
$$
  \int_{0}^{T}\left(\int_{M}|W_{77}|^{p}\dd V_{g(t)}\right)^{\frac{q}{p}}\dd t \leq A<\infty.
$$
Then $\varphi(t)$ extends smoothly past time $T$.
\end{theorem}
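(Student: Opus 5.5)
We argue by contradiction through a blow-up analysis. Suppose the flow cannot be extended past $T$. By Lotay--Wei \cite{LW17}, the quantity
\[
\Lambda(t)=\sup_M\bigl(|\Rm|^2+|\nabla T|^2\bigr)^{1/2}
\]
must blow up as $t\to T$; here $T$ denotes the torsion tensor and $|\nabla T|$ is controlled by $|\Rm|$ for closed structures. Along a sequence $t_i\to T$ we pick points $(x_i,t_i)$ that almost realize $\sup_{M\times[0,t_i]}\Lambda$. We then rescale the flow parabolically by $Q_i=\Lambda(x_i,t_i)\to\infty$, setting
\[
\varphi_i(s)=Q_i^{3/2}\varphi\bigl(t_i+s/Q_i\bigr).
\]
This gives flows on $[-Q_it_i,0]$ with $\Lambda\le 1+o(1)$ and $\Lambda(x_i,0)=1$. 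To obtain a smooth limit, we use the Shi-type estimates of \cite{LW17} together with a uniform injectivity radius bound. The latter comes from $\kappa$-noncollapsing, which follows from the Ricci lower bound $\Ric\ge -C|T|^2$ and the scalar bound, or from Lotay--Wei's compactness theorem with a Perelman-type argument. Passing to a subsequence, we get a complete limiting Laplacian flow $(M_\infty,\varphi_\infty(s))$ for $s\in(-\infty,0]$ with bounded curvature and $|\Rm_\infty|+|\nabla T_\infty|=1$ at the base point.

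Next we check what survives in the limit. Since $-R=|T|^2\ge0$ for closed structures, the bound $-R\le K$ rescales to $|T_i|^2\le K/Q_i\to0$. Hence $T_\infty\equiv0$, so $\varphi_\infty$ is torsion-free and static, and $g_\infty$ is Ricci-flat with holonomy in $G_2$. For torsion-free $G_2$-structures the curvature lies in $S^2(\mathfrak g_2)$. Ricci-flatness then forces $\Rm_\infty=W_\infty$, and this Weyl tensor lies entirely in the $77$-dimensional component, so $W_{64}=W_{27}=0$. Consequently $|\Rm_\infty|=|W_{77,\infty}|$. For Theorem \ref{thm1.1} we use the scaling of the Weyl norm in dimension $7$:
\[
\int|W_{77}|^p\,\dd V \text{ scales as } Q^{p-7/2}.
\]
When $p=7/2$ this integral is invariant. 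Fatou's lemma and smooth local convergence then give $\int_{M_\infty}|\Rm_\infty|^{7/2}\le A$. When $p>7/2$ the rescaled integral tends to $0$, which forces $\Rm_\infty\equiv0$ on every compact set. That contradicts $|\Rm_\infty|(x_\infty)=1$, since $\nabla T_\infty=0$ in the limit.

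In the borderline case $p=7/2$ we need a further step. We will use that $M_\infty$ is a complete Ricci-flat manifold with $\|\Rm\|_{L^{7/2}}<\infty$ and Euclidean volume growth, the latter inherited from noncollapsing together with Bishop--Gromov. By an $\varepsilon$-regularity or gap argument, this forces either flatness or an ALE structure. To rule out the ALE alternative, we choose $(x_i,t_i)$ by a point-picking argument so that the energy concentrates, or we use that $A$ is small on small balls after suitable localization. An alternative is to apply a Moser iteration to $|\Rm|$ along the flow, using $\partial_t|\Rm|\le\Delta|\Rm|+C|\Rm|^2$ and the critical $L^{7/2}$ bound, as in Ricci-flow extension results for $L^{n/2}$ of Weyl. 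This is the main obstacle, since the scaling-critical case does not close by rescaling alone. The noncollapsing and the need for a Sobolev inequality uniform in time are the technical inputs here, and both should follow from the scalar bound and the Lotay--Wei Ricci-type lower bounds.

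For Theorem \ref{thm1.2} we run the same blow-up. The space--time quantity rescales as
\[
\int_{-Q_it_i}^0\Bigl(\int|W_{77}^{(i)}|^p\Bigr)^{q/p}ds=Q_i^{\,q(1-7/(2p))-1}\int_{t_i-Q_i^{-1}\cdot(\cdot)}\Bigl(\int|W_{77}|^p\Bigr)^{q/p}dt.
\]
The exponent is $\le 0$ exactly when $1/q+7/(2p)\ge1$. Under the stated condition, the integral over a shrinking time window $[t_i-\delta/Q_i,t_i]$ of an $L^1$ function tends to $0$ by absolute continuity. Hence the rescaled norm tends to $0$ on $[-\delta,0]$ for $\delta>0$ fixed, while $p>7/2$ avoids the critical endpoint. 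Since the limit is static, $|W_{77,\infty}|=|\Rm_\infty|$ is time-independent. Its vanishing in $L^q L^p$ on $[-\delta,0]$ then gives $\Rm_\infty\equiv0$, which is again a contradiction.
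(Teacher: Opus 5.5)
\textbf{There is a genuine gap in the last step.} Your blow-up set-up and the description of the limit (torsion-free, static, $|\Rm_\infty|=|W_{77,\infty}|$) are fine. The final step for the mixed norm fails, because parabolic rescaling cannot reach the stated range of $(p,q)$. With $g_i=Q_ig$ and $s=Q_i(t-t_i)$ one has $\int_M|W^{(i)}_{77}|^p\dd V_{g_i}=Q_i^{7/2-p}\int_M|W_{77}|^p\dd V_g$ and $\dd s=Q_i\dd t$, so
\[
\int_{-\delta}^{0}\Bigl(\int_M|W^{(i)}_{77}|^p\dd V_{g_i}\Bigr)^{q/p}\dd s
=Q_i^{\,1-q\left(1-\frac{7}{2p}\right)}\int_{t_i-\delta/Q_i}^{t_i}\Bigl(\int_M|W_{77}|^p\dd V_{g}\Bigr)^{q/p}\dd t .
\]
Your exponent has the opposite sign. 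Even your own condition $\frac1q+\frac{7}{2p}\ge1$ is not implied by $\frac1q+\frac7p\le2$; take $p=q=100$.

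The correct prefactor is bounded exactly when $\frac2q+\frac7p\le2$, the parabolically scale-invariant condition, which is strictly stronger than the hypothesis. For $p=q=4$ (the case the paper highlights) the prefactor is $Q_i^{1/2}$; for $q=1$ and any $p\ge7$ it is $Q_i^{7/(2p)}$. Both blow up, so absolute continuity of the time integral gives no contradiction. Your argument therefore proves the theorem only on the subrange $\frac2q+\frac7p\le2$.

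\textbf{The missing idea is that the torsion bound makes large curvature persist much longer than the parabolic time $Q^{-1}$.} This is Lemma \ref{lem4.1}. If $|\Rm|\le Q$ on $[t_0-2Q^{-1},t_0]$ and $Q\ge K$, a maximum-principle argument for the derivatives of $\mathbf T$, started from $|\mathbf T|^2\le K$, gives $|\nabla^m\mathbf T|\le C\sqrt K\,Q^{m/2}$. Hence $\partial_t g=O(\sqrt{KQ})$ and $|\partial_t|\Rm||\le C\sqrt K\,Q^{3/2}$, instead of the generic $CQ^2$.

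The paper takes the first times $t_j$ with $\max_M|\Rm|=2^jQ_0$. It then gets $|W_{77}|\ge Q_j/2$ on $B_{g(t_j)}(x_j,\rho Q_j^{-1/2})$ for all $t$ in an interval $I_j$ of length $\delta/\sqrt{KQ_j}$. This interval is longer than $Q_j^{-1}$ by the factor $\sqrt{Q_j/K}$, and that factor is exactly what turns $\frac2q$ into $\frac1q$. With non-collapsing,
\[
\int_{I_j}\Bigl(\int_M|W_{77}|^p\dd V_{g(t)}\Bigr)^{q/p}\dd t\ge c\,Q_j^{\,q-\frac{7q}{2p}-\frac12},
\]
which is bounded below precisely when $\frac1q+\frac7p\le2$. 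Summing over the disjoint $I_j$ gives divergence.

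Your static limit only shows that $|\Rm^{(i)}|$ is nearly constant on each fixed interval $[-S,0]$. The argument needs persistence over rescaled times of order $\sqrt{Q_i/K}\to\infty$, i.e.\ the quantitative rate $|\partial_s|\Rm^{(i)}||\le C\sqrt{K/Q_i}$. Smooth convergence on compact sets cannot provide that.

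\textbf{A separate issue concerns non-collapsing.} The bound $\Ric\ge-C|\mathbf T|^2$ is not available for closed structures, since $\Ric$ contains $\nabla\mathbf T$. Combined with $R\le0$, it would bound $\Ric$ and settle extension under a scalar bound alone. Non-collapsing should instead come from viewing the metric flow as Ricci flow with an $O(K)$ error, as in Lemma \ref{noncollapse}.
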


\begin{remark}
    If we let $p=q=4$, then the mixed spacetime norm of $W_{77}$ is
    $$\int_{0}^{T}\int_{M}|W_{77}|^{4}\dd V_{g(t)}\dd t \leq A<\infty.$$
\end{remark}
  
For the diagonal case \(p=q=4\), we can weaken the assumption of $W_{77}$.
\begin{corollary}\label{1.4}
    Let $(M^7,\varphi(t))_{t\in[0,T)}$ be a solution of
\eqref{eq:flow} on a compact manifold, where $0<T<\infty$. 
Let $f:[0,\infty)\rightarrow[1,\infty)$ be finite-valued and nondecreasing, with
\begin{align}
    \int_{1}^{\infty}\frac{\dd s}{sf(s)}=\infty.\notag
\end{align}
Suppose that the scalar curvature and $W_{77}$ satisfy
$$
  \sup_{M\times[0,T)}(-R)\leq K<\infty
$$
with $K>0$ and  
\begin{align}
    \int_{0}^{T}\int_{M}\frac{|W_{77}|^{4}}{f(|W_{77}|/K)}\dd V_{g(t)}\dd t \leq A<\infty.\notag
\end{align}
Then $\varphi(t)$ extends smoothly past time $T$.
\end{corollary}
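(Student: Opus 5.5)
We argue by contradiction and, as for Theorem \ref{thm1.2}, reduce everything to a differential inequality for a space-time quantity controlling the full curvature. By \cite{LW17}, if the flow cannot be extended past $T$, then $\Lambda(t):=\sup_M(|\Rm|^2+|\nabla T|^2)^{1/2}$ blows up as $t\to T$. By the pinching estimate of \cite{LL26}, together with the decomposition $W=W_{77}+W_{64}+W_{27}$ of \cite{CI08}, the traceless Ricci part and the components $W_{64},W_{27}$ are algebraically controlled by the torsion $T$ and $\nabla T$. Since $R=-|T|^2$ for closed $G_2$-structures, the hypothesis $-R\le K$ gives the uniform bound $|T|^2\le K$. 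Hence the only part of the curvature not controlled a priori is $W_{77}$.

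The plan is to run the same energy/Moser-type argument used for the diagonal case $p=q=4$ of Theorem \ref{thm1.2}, keeping track of how the integrand $|W_{77}|^4$ enters. There the key inequality has the schematic form
\begin{equation*}
\frac{\dd}{\dd t}\log\bigl(1+\Phi(t)\bigr)\le C\Bigl(1+\int_M |W_{77}|^4\dd V_{g(t)}\Bigr),
\end{equation*}
where $\Phi$ is a suitable curvature energy, for instance $\int_M|\Rm|^{p_0}$ or $\Lambda^2$ after a blow-up/rescaling. Integrating in time yields boundedness. We replace this by the inequality
\begin{equation*}
\frac{\dd}{\dd t}\Phi\le C\,\Phi\Bigl(1+\int_M |W_{77}|^4\dd V\Bigr).
\end{equation*}
Then we split $M$ at time $t$ into the set $\{|W_{77}|\le K s\}$ and its complement, where $s\sim \Phi^{\alpha}$ is tied to the curvature scale.

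On the set $\{|W_{77}|>Ks\}$, monotonicity of $f$ gives
\begin{equation*}
|W_{77}|^4\le f(s)\,\frac{|W_{77}|^4}{f(|W_{77}|/K)}.
\end{equation*}
On the set $\{|W_{77}|\le Ks\}$, the contribution is absorbed using the $L^\infty$ bound $|W_{77}|\le C\Phi^{\alpha}$, with the power arranged so that it is absorbed by the good term in the evolution. This should yield
\begin{equation*}
\frac{\dd}{\dd t}\Phi\le C\,\Phi\, f(\Phi)\bigl(1+g(t)\bigr),
\qquad
g(t)=\int_M\frac{|W_{77}|^4}{f(|W_{77}|/K)}\dd V,
\qquad
\int_0^T g\le A.
\end{equation*}
Since $\int^\infty \frac{\dd s}{sf(s)}=\infty$, an Osgood-type comparison gives
\begin{equation*}
\int_{\Phi(0)}^{\Phi(t)}\frac{\dd s}{sf(s)}\le C(T+A),
\end{equation*}
so $\Phi$ remains bounded on $[0,T)$. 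This bounds the curvature, and \cite{LW17} then yields the extension.

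The main obstacle is the truncation step. In the proof of Theorem \ref{thm1.2} the $L^4$ norm probably enters through a blow-up argument, with parabolic rescaling where the norm $\int\!\!\int|W_{77}|^4$ is scale-invariant in dimension $7+2$ ($\tfrac14+\tfrac74=2$), rather than through a clean ODE. In that case we will instead run the contradiction argument directly. Rescale at points where $\Lambda$ nearly attains its maximum, using $\Lambda(t_i)$ with $t_i\to T$, and use the local curvature estimates of \cite{local curvature of G2} to get a limit flow with $|\Rm|(0)=1$ at the base point. On the rescaled flows, $|W_{77}|/K$ is large on the relevant region, of order $\Lambda_i/K\to\infty$. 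So the weighted integral is at least $\int\!\!\int|W_{77}|^4/f(C\Lambda_i)$ over a parabolic neighbourhood. We then need a dyadic (in $\Lambda$) summation over disjoint time intervals $[t_k,t_{k+1}]$ where $\Lambda$ doubles. On each such interval the unweighted $L^4$ space-time integral is bounded below by a universal $\epsilon_0$, from the $\epsilon$-regularity implicit in Theorem \ref{thm1.2}. The weighted sum is then $\gtrsim\sum_k 1/f(2^k)$, which diverges exactly when $\int^\infty\frac{\dd s}{sf(s)}=\infty$. This contradicts the bound $A$. The delicate point is establishing this uniform $\epsilon_0$ lower bound per doubling interval, using the torsion bound $|T|^2\le K$, which becomes negligible after rescaling.
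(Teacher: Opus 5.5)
Your second route has the same architecture as the paper: times $t_j$ at which $\max_M|\Rm|$ first reaches $Q_j=2^jQ_0$; a $j$-independent lower bound for the $W_{77}$ energy on disjoint subintervals of $[t_{j-1},t_j]$; the bound $|W_{77}|\le|\Rm|\le Q_j$ on $M\times[0,t_j]$, used with monotonicity of $f$ to extract $1/f(Q_j/K)$; and $\int_{2^jb}^{2^{j+1}b}\frac{\dd s}{sf(s)}\le\frac{\log 2}{f(2^jb)}$ to make the sum diverge. But you never supply the per-interval lower bound, which carries all the content, and the reason you give for it is false. Space-time $L^4$ is not parabolically scale invariant in dimension $7$. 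Under $g\mapsto Qg$, $t\mapsto Qt$ one gets $\int\!\!\int|W_{77}|^4\dd V\dd t\mapsto Q^{1/2}\int\!\!\int|W_{77}|^4\dd V\dd t$. The parabolic condition is $\frac1q+\frac{7}{2p}=1$, with critical diagonal exponent $9/2$. So $\epsilon$-regularity on a unit cylinder of the rescaled flow, i.e.\ on $B(x_j,Q_j^{-1/2})\times[t_j-Q_j^{-1},t_j]$, yields only $\gtrsim Q_j^{-1/2}$ in the original scale. Since $\sum_j Q_j^{-1/2}/f(Q_j/K)<\infty$, there is no contradiction. A qualitative compactness argument cannot repair this. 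It controls only bounded rescaled time windows, while you need the curvature to persist over rescaled time $\sim\sqrt{Q_j/K}\to\infty$.

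The missing ingredient is the quantitative slow-down from the torsion bound, Lemma \ref{lem4.1}. If $|\mathbf{T}|^2\le K$ and $|\Rm|\le Q$ on $[t_0-2Q^{-1},t_0]$, then $|\nabla^m\mathbf{T}|\le C\sqrt K\,Q^{m/2}$. Hence $\partial_tg=\nabla\mathbf{T}\ast\varphi+\mathbf{T}\ast\mathbf{T}=O(\sqrt{KQ})$, and $|\partial_t|\Rm||\le C\sqrt K\,Q^{3/2}$ rather than the generic $CQ^2$. This gives $t_j-t_{j-1}\ge c/\sqrt{KQ_j}$ and disjoint intervals $I_j=[t_j-\delta/\sqrt{KQ_j},t_j]$ on which $|\Rm|\ge\frac34Q_j$ on $B_j=B_{g(t_j)}(x_j,\rho Q_j^{-1/2})$. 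Since $|W_{27}|+|W_{64}|$ and the traceless Ricci part $|E|$ are all $\le C(\sqrt{KQ_j}+K)$, this forces $|W_{77}|\ge Q_j/2$ there. Lemma \ref{noncollapse} together with the volume evolution gives $\Vol_{g(t)}B_j\ge cQ_j^{-7/2}$. The count $Q_j^4\cdot Q_j^{-7/2}\cdot(KQ_j)^{-1/2}=K^{-1/2}$ shows why $p=q=4$ is borderline, and why $\frac1q+\frac7p\le 2$ is not a parabolic scaling condition. So the torsion bound is not merely a term that becomes negligible after rescaling: its smallness after rescaling is exactly what stretches the time window by the factor $\sqrt{Q_j/K}$.

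Drop your first route. No inequality $\frac{\dd}{\dd t}\log\Phi\le C(1+\int_M|W_{77}|^4\dd V)$ is available. Your truncation inequality is also reversed: on $\{|W_{77}|>Ks\}$, monotonicity gives $f(s)\frac{|W_{77}|^4}{f(|W_{77}|/K)}\le|W_{77}|^4$. The bound you want holds on the complementary set, which is why the paper simply uses the global bound $|W_{77}|\le Q_j$.
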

\begin{remark}
    We can let $f(x)=\log(e+x)$, then the assumption in Corollary \ref{1.4} is 
    \begin{align}
    \int_{0}^{T}\int_{M}\frac{|W_{77}|^{4}}{\log(e+|W_{77}|/K)}\dd V_{g(t)}\dd t \leq A<\infty.\notag
\end{align}
\end{remark}

We give an outline of this paper. We review the basic theory in Section \ref{section2} about $G_{2}$-structure and curvature tensor of closed $G_{2}$-structures. In Section \ref{section3}, we prove the spatial $L^p$ extension criterion, which can be divided into two parts: $p>\frac{7}{2}$ in Section \ref{section3.1} and $p=\frac{7}{2}$ in Section \ref{section3.2}. In Section \ref{section4}, we also prove the long-time existence of the Laplacian flow under bounded scalar curvature and mixed spacetime $L^{q}_{t}L^{p}_{x}$ of $W_{77}$.

\section{$G_{2}$-structure and curvature tensors}\label{section2}
In this section, we give some basis theory of $G_{2}$-structures in \cite{Bry06,Spiros introduce to G2,LW17}.

\subsection{$G_{2}$-structure on smooth $7$-manifold}\label{subsection2.1}

Let $\{e_{1},e_{2},\cdots,e_{7}\}$ denote the standard basis of $\mathbb{R}^{7}$ and let $\{e^{1},e^{2},\cdots,e^{7}\}$ be its dual basis. Define the 3-form
$$\phi:=e^{123}+e^{145}+e^{167}+e^{246}-e^{257}-e^{347}-e^{356},$$
where $e^{ijk}:=e^{i}\wedge e^{j}\wedge e^{k}$. The subgroup of ${\rm GL}(7,\mathbb{R})$ {that fixes $\phi$} is the exceptional Lie group $G_{2}$, a compact, connected, simple $14$-dimensional Lie subgroup of ${\rm  SO}(7)$. In fact, $G_{2}$ acts irreducibly on $\mathbb{R}^{7}$ and preserves the metric and orientation for which $\{e_{1},e_{2},\cdots,e_{7}\}$ is an oriented orthonormal basis. Note that $G_{2}$ also preserves the $4$-form
$$\ast_{\phi}\phi=e^{4567}+e^{2367}+e^{2345}+e^{1357}-e^{1346}-e^{1256}-e^{1247},$$
where $\ast_{\phi}$ is the Hodge star operator determined by the metric and orientation.

For a smooth $7$-manifold $M$ and a point $x\in M$, we define as in \cite{local curvature of G2,LW17}
\begin{align}
    \wedge_{+}^{3}(T_{x}^{\ast}M):=\left\{\varphi_{x}\in\wedge^{3}(T_{x}^{\ast}M)\Big{|} u^{\ast}\phi=\varphi_{x},
    \text{for\ invertible}\ u\in\text{Hom}_{\mathbb{R}}(T_{x}M,\mathbb{R}^{7})\right\}\notag
\end{align}
and the bundle
\begin{align}
    \wedge_{+}^{3}(T^{\ast}M):=\bigcup_{x\in M}\wedge_{+}^{3}(T_{x}^{\ast}M)\notag.
\end{align}
We call a section $\varphi$ of $\wedge_{+}^{3}(T^{\ast}M)$ a {\it positive $3$-form} on $M$ or a {\it $G_{2}$-structure} on $M$, and denote the space of positive $3$-forms by $\Omega^{3}_{+}(M)$. The existence of $G_{2}$-structure is equivalent to the
property that $M$ is both orientable and spinnable.

For a {positive} $3$-form $\varphi$, we define a $\Omega^{7}(M)$-valued bilinear form $\text{B}_{\varphi}$ by
$$\text{B}_{\varphi}(u,v)=\frac{1}{6}(u\lrcorner\varphi)\wedge(v\lrcorner\varphi)\wedge\varphi,$$
where $u, v$ are tangent vectors on $M$ and $``\lrcorner"$ is the interior multiplication operator (\textit{Here we use the orientation in \cite{Bry06}}). Then we can see that any $\varphi\in\Omega^{3}_{+}(M)$ determines a Riemannian metric $g_{\varphi}$ and an orientation $d V_{\varphi}$, hence the Hodge star operator $\ast_{\varphi}$ and the associated $4$-form
$$\psi:=\ast_{\varphi}\varphi$$
can also be uniquely determined by $\varphi$. 



\subsection{$G_{2}$-decomposition and torsion tensor}\label{subsection2.2} 

The group $G_{2}$ acts irreducibly on $\mathbb{R}^{7}$ (and hence on $\wedge^{1}(\mathbb{R}^{7})^{\ast}$ and $\wedge^{6}(\mathbb{R}^{7})^{\ast}$), but it
acts reducibly on $\wedge^{k}(\mathbb{R}^{7})^{\ast}$ for $2\leq k\leq 5$. Hence a $G_{2}$ structure $\varphi$ induces splittings
of the bundles $\wedge^{k}(T^{\ast}M)(2\leq k\leq5)$ into direct summands, which we denote by
$\wedge^{k}_{l}(T^{\ast}M,\varphi)$ with $l$ being the rank of the bundle. We let the space of {smooth} sections
of $\wedge^{k}_{l}(T^{\ast}M,\varphi)$ be $\Omega^{k}_{l}(M)$. Define the natural projections
$$\pi^{k}_{l}:\Omega^{k}(M)\longrightarrow \Omega^{k}_{l}(M),\ \ \alpha\longmapsto \pi^{k}_{l}(\alpha).$$
Then we have
\begin{align}
    \Omega^{2}(M)&=\Omega^{2}_{7}(M)\oplus\Omega^{2}_{14}(M),\notag\\
    \Omega^{3}(M)&=\Omega^{3}_{1}(M)\oplus\Omega^{3}_{7}(M)\oplus\Omega^{3}_{27}(M)\notag.
\end{align}
where each component is determined by
\begin{align}
    \Omega^{2}_{7}(M)&=\{X\lrcorner\varphi:X\in {\Gamma}(TM)\}=\{\beta\in\Omega^{2}(M):\ast_{\varphi}(\varphi\wedge\beta)=2\beta\},\notag\\
    \Omega^{2}_{14}(M)&=\{\beta\in\Omega^{2}(M):\psi\wedge\beta=0\}=\{\beta\in\Omega^{2}(M):\ast_{\varphi}(\varphi\wedge\beta)=-\beta\},\notag
\end{align}
and
\begin{align}
    \Omega^{3}_{1}(M)&=\{f\varphi:f\in C^{\infty}(M)\},\notag\\
    \Omega^{3}_{7}(M)&=\{\ast_{\varphi}(\varphi\wedge\alpha):\alpha\in\Omega^{1}(M)\}=\{X\lrcorner\psi:X\in {\Gamma}(TM)\},\notag\\
    \Omega^{3}_{27}(M)&=\{\eta\in\Omega^{3}(M):\eta\wedge\varphi=\eta\wedge\psi=0\}.\notag
\end{align}

\begin{remark}\label{remark2.2}
    $\Omega^{4}$ and $\Omega^{5}$ have the corresponding decompositions by Hodge duality. The more details for $G_{2}$-decomposition see \cite{Bry06,Spiros introduce to G2}.
\end{remark}

By the definition  of $G_{2}$ decomposition, we can find unique differential forms
$\tau_{0}\in {C^{\infty}}(M),\tau_{1},\widetilde{\tau}_{1}\in\Omega^{1}(M),\tau_{2}\in\Omega^{2}_{14}(M)$ and $\tau_{3}\in\Omega^{3}_{27}(M)$ such that (see \cite{Bry06})
\begin{align}
    d\varphi&=\tau_{0}\psi+3\!\ \tau_{1}\wedge\varphi+\ast_{\varphi}\tau_{3},\notag\\
    d\psi&=4\!\ \widetilde{\tau}_{1}\wedge\psi+\tau_{2}\wedge\varphi.\notag
\end{align}
In fact, {Bryant \cite{Bry06}} proved that $\tau_{1}=\widetilde{\tau}_{1}$. We call $\tau_{0}$ the \textit{scalar torsion}, $\tau_{1}$ the \textit{vector torsion}, $\tau_{2}$ the \textit{Lie algebra torsion}, and $\tau_{3}$ the \textit{symmetric traceless torsion}. We also call $\tau_{\varphi}:=\{\tau_{0},\tau_{1},\tau_{2},\tau_{3}\}$ the intrinsic torsion forms of the $G_{2}$-structure $\varphi$.
From \cite{Spiros introduce to G2}, the full torsion tensor $\mathbf{T}=\mathbf{T}_{ij}dx^{i}\otimes dx^{j}$ satisfies the followings:
\begin{align}
    \nabla_{i}\varphi_{jkl}&=\mathbf{T}_{i}^{\ m}\psi_{mjkl},\notag
\end{align}
The full torsion tensor $\mathbf{T}_{ij}$ is related to the
intrinsic torsion forms by the following:
\begin{align}\label{Def of T}
\mathbf{T}_{ij}=\frac{\tau_{0}}{4}g_{ij}-(\tau_{27})_{ij}-(\tau_{1}^{\#}\lrcorner\varphi)_{ij}-\frac{1}{2}(\tau_{2})_{ij}
\end{align}
where $(\tau_{1}^{\#}\lrcorner\varphi)_{ij}=(\tau_{1}^{\#})^{l}\varphi_{lij}$, $\tau_{27}\in\Gamma(S_{0}^{2}T^{\ast}M)$ satisfies $\tau_{3}=\tau_{27}\diamond\psi$ and and $\#$ is the isomorphism from $1$-form to vector fields.

${}$

If $\varphi$ is closed, which means $d\varphi=0$, then $\tau_{0},\tau_{1},\tau_{3}$ are all zero from \eqref{Def of T}, so the only nonzero torsion form is 
$$\tau\equiv\tau_{2}=\frac{1}{2}(\tau_{2})_{ij}dx^{i}{\wedge} dx^{j}=\frac{1}{2}\tau_{ij}dx^{i}{\wedge} dx^{j}.$$
Then we have 
$$
\mathbf{T}_{ij}=-\mathbf{T}_{ji}=-\frac{1}{2}(\tau_{2})_{ij}\ \ \text{or equivalently}\ \ \mathbf{T}=-\frac{1}{2}\tau,
$$
so that $\mathbf{T}$ is a skew-symmetric 2-tensor, i.e., a $2$-form.

\subsection{Curvature tensor}\label{subsection2.4}
Since $\varphi$ determines the unique metric $g\equiv g_{\varphi}$ on $M$, we obtain the Levi-Civita connection $\nabla\equiv\nabla_{g}$ induced by $g$, and also the Hodge Laplacian of $g$
$$\Delta_{\varphi}=dd^{\ast}_{\varphi}+d^{\ast}_{\varphi}d. $$

For a closed $G_{2}$-structure $\varphi$, the Ricci curvature is given by (see \cite{LW17})
\begin{align}
    R_{ik}=\nabla_{j}\mathbf{T}_{li}\varphi_{k}^{\ jl}-\mathbf{T}_{i}^{\ j}\mathbf{T}_{jk}.
\end{align}
The scalar curvature $R$ satisfies
\begin{align}\label{eq:scalar-torsion}
    R=g^{ij}R_{ij}=-|\mathbf{T}|^{2}=-g^{ik}g^{jl}\mathbf{T}_{ij}\mathbf{T}_{kl},
\end{align}
where $|\cdot|$ is the norm induced by metric $g$. Moreover, the factor $\nabla_{i}\mathbf{T}_{jm}$ can be expressed as

\begin{lemma}\label{lemma2.4}
    For any closed $G_{2}$-structure $\varphi$, we have
\begin{align}
    \nabla_{i}\mathbf{T}_{jk}&=\frac{1}{4}R_{ijmn}\varphi_{k}^{\ mn}+\frac{1}{4}R_{kjmn}\varphi_{i}^{\ mn}-\frac{1}{4}R_{ikmn}\varphi_{j}^{\ mn}\notag\\
    &\quad -\frac{1}{2}\mathbf{T}_{im}\mathbf{T}_{jn}\varphi_{k}^{\ mn}-\frac{1}{2}\mathbf{T}_{km}\mathbf{T}_{jn}\varphi_{i}^{\ mn}+\frac{1}{2}\mathbf{T}_{im}\mathbf{T}_{kn}\varphi_{j}^{\ mn}\notag.
\end{align}
\end{lemma}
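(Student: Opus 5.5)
The identity should follow by differentiating $\nabla_{i}\varphi_{jkl}=\mathbf{T}_{i}^{\ m}\psi_{mjkl}$ once more, antisymmetrizing to produce curvature, and then using the contraction identities of $\varphi$ and $\psi$ to solve for $\nabla\mathbf{T}$. This is the standard route of Karigiannis \cite{Spiros introduce to G2}, specialized to the closed case $\mathbf{T}=-\frac12\tau$ skew.

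\textbf{Step 1 ($G_2$-Bianchi identity).} Apply $\nabla_{j}$ to $\nabla_{i}\varphi_{abc}=\mathbf{T}_{i}^{\ m}\psi_{mabc}$. I will use $\nabla_{j}\psi_{mabc}=-(\mathbf{T}_{jm}\varphi_{abc}-\mathbf{T}_{ja}\varphi_{mbc}-\mathbf{T}_{jb}\varphi_{amc}-\mathbf{T}_{jc}\varphi_{abm})$, which follows by differentiating $\psi=\ast\varphi$ or the contraction identities. Antisymmetrizing in $i,j$, the left side becomes the Ricci identity $-R_{ija}^{\ \ \ p}\varphi_{pbc}-R_{ijb}^{\ \ \ p}\varphi_{apc}-R_{ijc}^{\ \ \ p}\varphi_{abp}$. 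Contracting this 5-index identity with $\varphi^{abc}$ or $\psi$ is lengthy, so instead I will project it onto the $\psi$-direction. This should give Karigiannis' identity
\[
\nabla_{i}\mathbf{T}_{jk}-\nabla_{j}\mathbf{T}_{ik}=\mathbf{T}_{ia}\mathbf{T}_{jb}\varphi^{ab}_{\ \ k}+\tfrac12 R_{ijab}\varphi^{ab}_{\ \ k}.
\]
In deriving it I will use $\varphi_{ijk}\varphi_{abk}=g_{ia}g_{jb}-g_{ib}g_{ja}-\psi_{ijab}$ and $\varphi_{ijk}\psi_{abck}=g_{ia}\varphi_{jbc}+\cdots$, and track signs against the orientation convention of \cite{Bry06}.

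\textbf{Step 2 (solving for $\nabla\mathbf{T}$).} Define
\[
S_{ijk}:=\tfrac12R_{ijab}\varphi_{k}^{\ ab}-\mathbf{T}_{ia}\mathbf{T}_{jb}\varphi_{k}^{\ ab},
\]
so that Step 1 says $\nabla_i\mathbf{T}_{jk}-\nabla_j\mathbf{T}_{ik}=S_{ijk}$ (up to the sign bookkeeping). $S$ is skew in $i,j$. Since $\mathbf{T}$ is skew, $A_{ijk}:=\nabla_i\mathbf{T}_{jk}$ is skew in $j,k$. A tensor skew in its last two indices is recovered from its antisymmetrization in the first two by the Christoffel-type formula
\[
A_{ijk}=\tfrac12\big(S_{ijk}+S_{kij}-S_{jki}\big),
\]
where the permutations are chosen so that the result is skew in $j,k$. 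Substituting the definition of $S$ should give exactly the six terms of Lemma \ref{lemma2.4}, with coefficients $\frac14$ on the curvature terms and $\frac12$ on the torsion terms. The $\mathbf{T}\mathbf{T}\varphi$ terms should match after using skewness of $\mathbf{T}$ and of $\varphi$. This is a standard linear-algebra inversion.

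\textbf{Main obstacle.} The delicate part is Step 1. Extracting the Bianchi-type identity requires contracting the Ricci identity with $\psi$ or $\varphi$, and signs depend on the orientation convention. There is also the question of whether the term $R_{ijab}\psi^{ab}_{\ \ km}\mathbf{T}$ cancels. I will check the final formula by tracing it: contracting Lemma \ref{lemma2.4} with $\varphi$ must reproduce the stated Ricci formula $R_{ik}=\nabla_{j}\mathbf{T}_{li}\varphi_{k}^{\ jl}-\mathbf{T}_{i}^{\ j}\mathbf{T}_{jk}$. This trace check should fix any sign ambiguity.
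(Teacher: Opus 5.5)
The paper does not prove this lemma itself; it cites \cite{LW17}. Your route is the argument used there: Karigiannis' $G_2$-Bianchi identity, followed by an algebraic inversion that uses the fact that $\mathbf{T}$ is a $2$-form. Step 2 is correct. The formula $A_{ijk}=\tfrac12(S_{ijk}+S_{kij}-S_{jki})$ inverts $A\mapsto A_{ijk}-A_{jik}$ on tensors skew in the last two indices. With your $S_{ijk}=\tfrac12R_{ijab}\varphi_{kab}-\mathbf{T}_{ia}\mathbf{T}_{jb}\varphi_{kab}$ it gives exactly the six terms of the lemma.

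The flaw is in Step 1, which as written contradicts Step 2 at precisely the sign you deferred to ``bookkeeping''. Both $\varphi_{ijk}\varphi_{abk}=g_{ia}g_{jb}-g_{ib}g_{ja}-\psi_{ijab}$ and the identity with $+\mathbf{T}_{ia}\mathbf{T}_{jb}\varphi_{abk}$ carry the signs of the opposite convention. That is Karigiannis' original one, in which $(\varphi,\psi)$ is replaced by $(-\varphi,-\psi)$.

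In Bryant's convention, which the paper uses, two things change:
\begin{itemize}
\item the contraction identity has $+\psi_{ijab}$, as you can check on $\varphi_{12k}\varphi_{47k}=-1=\psi_{1247}$;
\item one has $\varphi_{pbc}\psi_{nabc}=4\varphi_{pna}$.
\end{itemize}
Contract the antisymmetrized second-derivative identity with $\psi_{nabc}$, using $\psi_{mabc}\psi_{nabc}=24g_{mn}$ and $\varphi_{abc}\psi_{nabc}=0$. This takes a few lines, and no $R\psi\mathbf{T}$ term ever appears. It gives
\[
\nabla_i\mathbf{T}_{jk}-\nabla_j\mathbf{T}_{ik}=-\mathbf{T}_{ia}\mathbf{T}_{jb}\varphi_{abk}+\tfrac12R_{ijab}\varphi_{abk},
\]
in the curvature convention $R_{ik}=g^{jl}R_{ijkl}$. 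This is exactly your $S$. Feeding your Step 1 display literally into Step 2 would instead flip the signs of all three $\mathbf{T}\mathbf{T}\varphi$ terms.

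Your proposed trace check does catch the error. Contract with $\varphi_{ikl}$, and use $\mathbf{T}_{ab}\varphi_{abc}=0$ and $\mathbf{T}_{ab}\psi_{abcd}=-2\mathbf{T}_{cd}$, both valid since $\mathbf{T}\in\Omega^2_{14}$.
\begin{itemize}
\item With the minus sign you recover $R_{jk}=\nabla_i\mathbf{T}_{jl}\varphi_{ikl}-\mathbf{T}_{ji}\mathbf{T}_{ik}$ and $R=-|\mathbf{T}|^2$.
\item With the plus sign you get $+5\mathbf{T}_{ji}\mathbf{T}_{ik}$ in place of $-\mathbf{T}_{ji}\mathbf{T}_{ik}$, hence $R=-7|\mathbf{T}|^2$.
\end{itemize}
Once Step 1 is restated in the paper's convention, the proof is complete.
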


\begin{proof}
    This lemma has been proved in \cite{LW17}.
\end{proof}

Let $\cK$ denote the vector space of algebraic curvature tensors in
seven dimensions.  The ordinary orthogonal decomposition is
\[
  \Rm=W+\frac15E\circ g+\frac{R}{84}g\circ g,
  \qquad
  E=\Ric-\frac17Rg,
\]
where $\circ$ is the Kulkarni--Nomizu product.  Cleyton-Ivanov \cite{CI08} proved that the $G_2$-structure
splits the Weyl tensor further.

\begin{lemma}[\cite{CI08}]
For a $G_2$-structure $\varphi$, the bundle of algebraic curvature
tensors decomposes orthogonally into irreducible $G_2$-modules as
\begin{align}
  \cK=\mathbb R\oplus S^2_0\oplus\mathcal W_{27}\oplus\mathcal W_{64}
   \oplus\mathcal W_{77}.\notag
\end{align}
Equivalently, the Riemann curvature tensor has the unique decomposition
\begin{equation}\label{eq:Riemann-G2-decomposition}
  \Rm
  =\frac{R}{84}g\circ g
   +\frac15E\circ g
   +W_{27}+W_{64}+W_{77}.
\end{equation}
where $W_{27}, W_{64}$ and $W_{77}$ are all elements of the projections of the Weyl tensor onto the respective subspaces.
\end{lemma}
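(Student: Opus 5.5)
The plan is to obtain the decomposition from the representation theory of $G_2\subset SO(7)$ acting pointwise on $\cK\subset S^2(\wedge^2)$. The $O(7)$-decomposition $\cK=\mathbb R\oplus S^2_0\oplus\mathcal W$ is classical, with dimensions $196=1+27+168$. So it suffices to show that, as a $G_2$-module, the Weyl space $\mathcal W$ splits as $\mathcal W\cong V_{27}\oplus V_{64}\oplus V_{77}$, and then to define $W_{27},W_{64},W_{77}$ as the isotypic projections of $W$.

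\emph{Step 1: decompose $S^2(\wedge^2)$.} Use $\wedge^2=\wedge^2_7\oplus\wedge^2_{14}\cong V_7\oplus\mathfrak g_2$. This gives
\[
S^2(\wedge^2)\cong S^2(V_7)\oplus (V_7\otimes\mathfrak g_2)\oplus S^2(\mathfrak g_2),
\]
of dimensions $28+98+105=231$. I will use the standard $G_2$ branching rules, which can be checked by dimension counts and highest weights:
\[
S^2(V_7)=V_1\oplus V_{27},\qquad V_7\otimes\mathfrak g_2=V_7\oplus V_{27}\oplus V_{64},\qquad S^2(\mathfrak g_2)=V_1\oplus V_{27}\oplus V_{77}.
\]

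\emph{Step 2: identify $\cK$.} $\cK$ is the kernel of the Bianchi map $b:S^2(\wedge^2)\to\wedge^4$. This map is $SO(7)$-equivariant and surjective, since $\wedge^4\cong\wedge^3$ is the complement of $\cK$ in $S^2(\wedge^2)$ and $231-196=35$. Moreover $\wedge^4\cong\wedge^3=V_1\oplus V_7\oplus V_{27}$ as $G_2$-modules. By Schur's lemma, removing these summands gives
\[
\cK\cong 2V_1\oplus V_7\oplus 3V_{27}\oplus V_{64}\oplus V_{77}\ \ominus\ (V_1\oplus V_7\oplus V_{27})=V_1\oplus 2V_{27}\oplus V_{64}\oplus V_{77},
\]
and indeed $1+54+64+77=196$. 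The Ricci contraction maps $\cK$ equivariantly onto $S^2=V_1\oplus V_{27}$, with kernel $\mathcal W$. Hence
\[
\mathcal W\cong V_{27}\oplus V_{64}\oplus V_{77},
\]
and dimensions confirm this, since $27+64+77=168$.

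\emph{Step 3: orthogonality and uniqueness.} The metric on $\cK$ is $G_2$-invariant, so distinct isotypic components are orthogonal by Schur's lemma. The summands $\mathcal W_{64}$ and $\mathcal W_{77}$ occur with multiplicity one in $\cK$ and are therefore canonical. The only subtle point, which I expect to be the main obstacle, is $\mathcal W_{27}$, since $V_{27}$ has multiplicity two in $\cK$. There $\mathcal W_{27}$ is defined as the intersection of the $27$-isotypic part of $\cK$ with $\mathcal W$, which is the orthogonal complement of $S^2_0\circ g$ inside that isotypic part. This is still canonical and orthogonal to $\frac15E\circ g$.

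Combining these with the $O(7)$ splitting $\Rm=\frac{R}{84}g\circ g+\frac15E\circ g+W$ yields \eqref{eq:Riemann-G2-decomposition} uniquely. The main verification burden lies in the three tensor-product branching rules of Step 1. I would check these by character computations for $G_2$, or simply cite them from \cite{CI08}.
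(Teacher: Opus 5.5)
Your argument is correct. The paper gives no proof of this lemma; it simply imports the decomposition from Cleyton--Ivanov \cite{CI08}. What you supply is therefore a self-contained version of the representation-theoretic computation behind that citation:
\begin{itemize}
\item restrict $S^2(\wedge^2)=\cK\oplus\wedge^4$ to $G_2$ using $\wedge^2=\wedge^2_7\oplus\wedge^2_{14}$;
\item subtract $\wedge^4\cong V_1\oplus V_7\oplus V_{27}$;
\item remove the Ricci part.
\end{itemize}
Your branching rules, multiplicities and dimension counts all check out, including $\cK\cong V_1\oplus 2V_{27}\oplus V_{64}\oplus V_{77}$ and $\mathcal W\cong V_{27}\oplus V_{64}\oplus V_{77}$. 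You also handle the only delicate point correctly. Because $V_{27}$ occurs twice in $\cK$, $\mathcal W_{27}$ must be taken as the $27$-isotypic part of $\mathcal W$. Equivalently, it is the orthogonal complement of $S^2_0\circ g$ inside the $27$-isotypic part of $\cK$, and this is exactly what makes the decomposition canonical and orthogonal.

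One refinement is worth making explicit. $G_2$ has two inequivalent $77$-dimensional irreducible modules. Your multiplicity-one claim for $\mathcal W_{77}$ holds because only the one inside $S^2(\wedge^2_{14})$ occurs anywhere in $S^2(\wedge^2)$. As a by-product, your branching shows at once that $\mathcal W_{77}\subset\cK\cap S^2(\wedge^2_{14})$, the space of curvature tensors of holonomy-$G_2$ type.

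The two routes buy different things. Your abstract isotypic argument gives existence, uniqueness and orthogonality of the splitting, which is all the lemma asserts. The citation to \cite{CI08} additionally supplies explicit formulas for the projections, which the paper uses later: the identity \eqref{eq:W27-norm} with its constant $\frac{15}{28}$, and the bound \eqref{eq:W64-bound} for closed structures. Neither follows from the abstract splitting alone. The first needs an explicit normalization computation with $\varphi$; the second needs the relation between curvature and $\nabla\mathbf{T}$ given by Lemma \ref{lemma2.4}.
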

Cleyton-Ivanov \cite{CI08} showed that $W_{27}$ satisfies
\begin{equation}
\label{eq:W27-norm}
|W_{27}|^2 = \frac{15}{28} |{{\rm Ric}}^{W}|^2.
\end{equation}
where the tensor ${{\rm Ric}}^{W}$ is $W$-Ricci tensor introduced in \cite{CI08}, in the closed case is controlled by Ricci curvature and the torsion tensor. The $64$-dimensional Weyl component satisfies
\begin{equation}
\label{eq:W64-bound}
|W_{64}| \leq C(|\nabla\mathbf{T}|+|\mathbf{T}|^{2}).
\end{equation}

From \cite{LW17}, we see that the associated metric tensor $g(t)$ evolves by
\begin{align}\label{eq:metric-flow-Chen}
\partial_{t}g_{ij}=2h_{ij},
\end{align}
where
$$
h_{ij}=-R_{ij}-\frac{1}{3}|\mathbf{T}|^{2}g_{ij}-2\widehat{\mathbf{T}}_{ij}
$$
and $\widehat{\mathbf{T}}_{ij}=\mathbf{T}_{i}^{\ k}\mathbf{T}_{kj}$.
Moreover,
\begin{equation}\label{eq:volume-evolution}
  \partial_t\dd V_g
  =\frac23|T|^2\dd V_g
  =-\frac23R\,\dd V_g.
\end{equation}

\section{Proof of the spatial $L^p$ extension criterion}\label{section3}

In this section, wIn this section we prove the spatial \(L^p\) extension criterion, which can be divided into two parts: $p>\frac{7}{2}$ in Section \ref{section3.1} and $p=\frac{7}{2}$ in Section \ref{section3.2}.
\subsection{$p>\frac{7}{2}$}\label{section3.1}
As \cite{LW17}, we give the definition of $\Lambda$
$$\Lambda(x,t):=(|{\rm Rm}(x,t)|^{2}_{g(t)}+|\nabla\mathbf{T}(x,t)|^{2}_{g(t)})^{\frac{1}{2}}.$$
Next, we recall some lemmas to prove the case $p>\frac{7}{2}$.
\begin{lemma}[Shi-type estimate,\cite{LW17}]\label{Shi}
    Suppose that $K>0$ and $\varphi(t)$ is a solution of the Laplacian flow for closed $G_{2}$ structures on a compact manifold $M$ for $t\in[0,\frac{1}{K}]$. For all $k\in\mathbb{N}$, there exists a constant $C_{k}$ such that if $\Lambda(x,t)\leq K$ on $M\times [0,\frac{1}{K}]$, then 
    $$|\nabla^{k}{\rm Rm}(x,t)|_{g(t)}+|\nabla^{k+1}\mathbf{T}(x,t)|_{g(t)}\leq C_{k}t^{-\frac{k}{2}}K,\quad\quad t\in\left(0,\frac{1}{K}\right].$$
\end{lemma}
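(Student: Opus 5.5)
The plan is the Bernstein--Shi argument for Ricci flow, adapted to the coupled pair $(\Rm,\nabla\mathbf{T})$ as in \cite{LW17}. It proceeds by induction on $k$ and uses the maximum principle on a suitably weighted quantity. The first step is to record the evolution equations along \eqref{eq:flow}. Using \eqref{eq:metric-flow-Chen}, the formula for $h_{ij}$ and Lemma \ref{lemma2.4}, one derives schematically
\begin{align*}
\partial_t \Rm &= \Delta \Rm + \Rm*\Rm + \Rm*\mathbf{T}*\mathbf{T} + \nabla\mathbf{T}*\nabla\mathbf{T} + \nabla^2\mathbf{T}*\mathbf{T} + \nabla\mathbf{T}*\mathbf{T}*\mathbf{T},\\
\partial_t \nabla\mathbf{T} &= \Delta\nabla\mathbf{T} + \Rm*\nabla\mathbf{T} + \nabla\Rm*\mathbf{T} + \nabla\mathbf{T}*\mathbf{T}*\mathbf{T} + \Rm*\mathbf{T}*\mathbf{T}*\ldots,
\end{align*}
with contractions by $g$, $\varphi$, $\psi$, which are parallel up to torsion terms. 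Commuting $\nabla^k$ with $\partial_t-\Delta$ then gives
\[
(\partial_t-\Delta)\nabla^k\Rm = \sum_{i+j=k}\nabla^i\Rm*\nabla^j\Rm+\cdots,
\]
and a similar formula for $\nabla^{k+1}\mathbf{T}$. The only terms with more derivatives than $k$ on the right-hand side are $\nabla^{k+2}\mathbf{T}*\mathbf{T}$ and $\nabla^{k+1}\Rm*\mathbf{T}$.

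The key observation for controlling the lower-order factors is that \eqref{eq:scalar-torsion} gives $|\mathbf{T}|^2=-R\le C|\Rm|\le CK$, so $|\mathbf{T}|\le CK^{1/2}$. Consequently each extra factor of $\mathbf{T}$ costs only $K^{1/2}$, which is consistent with the parabolic scaling $t\sim K^{-1}$.

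For $k=1$, I would set
\[
F=t\bigl(|\nabla\Rm|^2+|\nabla^2\mathbf{T}|^2\bigr)+\beta\bigl(|\Rm|^2+|\nabla\mathbf{T}|^2\bigr).
\]
From the evolution equations,
\[
(\partial_t-\Delta)\bigl(|\Rm|^2+|\nabla\mathbf{T}|^2\bigr)\le -2\bigl(|\nabla\Rm|^2+|\nabla^2\mathbf{T}|^2\bigr)+CK^3+CK^{1/2}\Lambda\bigl(|\nabla\Rm|+|\nabla^2\mathbf{T}|\bigr).
\]
The last term is absorbed via Cauchy--Schwarz into the good gradient term, at the cost of a further $CK^3$. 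Similarly,
\[
(\partial_t-\Delta)\bigl(|\nabla\Rm|^2+|\nabla^2\mathbf{T}|^2\bigr)\le -\bigl(|\nabla^2\Rm|^2+|\nabla^3\mathbf{T}|^2\bigr)+CK\bigl(|\nabla\Rm|^2+|\nabla^2\mathbf{T}|^2\bigr)+\ldots
\]
Here the top-order cross terms $\nabla^{2}\Rm*\mathbf{T}*\nabla\Rm$ and $\nabla^3\mathbf{T}*\mathbf{T}*\nabla^2\mathbf{T}$ are absorbed into the negative square using $|\mathbf{T}|^2\le CK$. Since $tK\le 1$, choosing $\beta$ large gives $(\partial_t-\Delta)F\le CK^2$. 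Because $F(0)\le\beta K^2$, the maximum principle yields $F\le CK^2$ on $[0,1/K]$, which is the case $k=1$.

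For general $k$, I assume the bounds for all orders $<k$, which give $|\nabla^j\Rm|+|\nabla^{j+1}\mathbf{T}|\le CKt^{-j/2}$. I then apply the same scheme to
\[
F_k=t^k\bigl(|\nabla^k\Rm|^2+|\nabla^{k+1}\mathbf{T}|^2\bigr)+\beta_k\sum_{j<k}t^{j}\bigl(|\nabla^j\Rm|^2+|\nabla^{j+1}\mathbf{T}|^2\bigr)
\]
and choose the constants $\beta_k$ inductively.

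I expect the main obstacle to be bookkeeping rather than conceptual difficulty. In contrast with Ricci flow, the reaction terms contain factors of $\mathbf{T}$ carrying one more derivative than the quantity being estimated. Closing the estimate requires checking that every such term appears paired with a negative gradient square of matching order and with a coefficient bounded by $K^{1/2}$, so that Young's inequality can absorb it. Ensuring the exponents of $t$ and $K$ match the scaling $t^{-k/2}K$ in every term is the delicate step. I would verify this first by dimensional analysis, assigning $\mathbf{T}$ weight $1$ and $\nabla$ weight $1$, before doing the explicit computation.
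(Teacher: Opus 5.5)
Your strategy is the one behind the cited result. The paper gives no proof of this lemma; it quotes \cite{LW17}, where it is proved by this same Bernstein--Shi scheme. That scheme uses $|\mathbf{T}|^2=-R\le CK$, pairs $\nabla^k\Rm$ with $\nabla^{k+1}\mathbf{T}$ so that $\nabla^{k+2}\mathbf{T}*\mathbf{T}$ and $\nabla^{k+1}\Rm*\mathbf{T}$ are absorbed into the good gradient terms, and applies the maximum principle to a time-weighted quantity. Two points in your write-up need repair.

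\emph{The case $k=1$.} The conclusion should be $(\partial_t-\Delta)F\le CK^3$, not $CK^2$. Every reaction term has the scaling weight of $K^3$, as your first inequality shows. The terms hidden in your dots, e.g.\ $CK^{5/2}(|\nabla\Rm|+|\nabla^2\mathbf{T}|)\le CK\Phi_1+CK^4$ with $\Phi_1=|\nabla\Rm|^2+|\nabla^2\mathbf{T}|^2$, contribute $O(K^3)$ after multiplication by $t\le 1/K$. It is $CK^3$ that yields $F\le\beta K^2+CK^3t\le CK^2$ on $[0,1/K]$. A bound $CK^2$ would only give $\beta K^2+CK$, which has the wrong scaling.

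\emph{General $k$.} More substantively, your $F_k$ does not close as displayed. Write $\Phi_j=|\nabla^j\Rm|^2+|\nabla^{j+1}\mathbf{T}|^2$.
\begin{itemize}
\item Applying $\partial_t-\Delta$ to $\beta_k t^j\Phi_j$ produces the positive term $\beta_k(j+CKt)\,t^{j-1}\Phi_j$, from the time weight and the reaction term $CK\Phi_j$.
\item The only negative term of the same form is $-c\,\beta_k t^{j-1}\Phi_j$, with $c<2$ once part of the good term is spent on the cross terms. It comes from level $j-1$ and carries the same factor $\beta_k$.
\item Since both sides scale with $\beta_k$, enlarging $\beta_k$ does not help, and for $j\ge 2$ the positive term wins.
\item The induction hypothesis does not rescue this either: it only gives $t^{j-1}\Phi_j\le CK^2t^{-1}$, which is not integrable at $t=0$.
\end{itemize}

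The fix is to use level-dependent weights, $F_k=\sum_{j=0}^{k}b_jt^j\Phi_j$, with $b_k=1$ and $b_{j-1}\ge (j+C)b_j$ chosen backwards. Then each level's time-derivative and reaction terms are absorbed by the good term of the level below. Only the level-$0$ term survives at $t=0$, so $F_k(0)\le b_0K^2$. The intermediate products, such as $\nabla^i\Rm*\nabla^{j-i}\Rm$ with $1\le i\le j-1$, are controlled by the inductive bounds and Young's inequality. This is the standard form of Shi's estimates, and it is the device the paper itself uses in the proof of Lemma \ref{lem4.1}.

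Alternatively, prove the order-$k$ bound at a fixed time $t_0$ by a two-level argument on $[t_0/2,t_0]$. On that interval the inductive bounds on lower derivatives are uniform.
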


\begin{lemma}[\cite{LLM26}]\label{3.2}
Let $(N, g)$ be complete and let $\alpha$ be a smooth tensor field satisfying
\[
|\alpha|_{g} \le c_{0}, \qquad |\nabla_{g}^2 \alpha|_{g} \le c_{2}.
\]
Then
$$
|\nabla_{g} \alpha|^2_{g} \le 2nc_{0} c_{2}.
$$
where $c_{0},c_{2}$ are nonnegative constants and $n$ is the dimension of manifold. 
\end{lemma}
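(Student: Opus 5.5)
The statement immediately above is Lemma \ref{3.2}, the interpolation inequality $|\nabla\alpha|^2\le 2n c_0c_2$. My plan is a pointwise argument along geodesics. I will reduce to one-variable calculus on the scalar functions obtained by pairing $\nabla\alpha$ with parallel frames, and then use the boundedness of $\alpha$ and $\nabla^2\alpha$ to exclude a large first derivative. Completeness of $(N,g)$ is used only to guarantee that geodesics exist for all time, so the one-dimensional argument can run on the whole real line.

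Fix $x\in N$ and write $\nabla\alpha(x)=\sum_{i=1}^n e^i\otimes\nabla_{e_i}\alpha(x)$ in an orthonormal basis $\{e_i\}$ of $T_xN$, so that $|\nabla\alpha(x)|^2=\sum_i|\nabla_{e_i}\alpha(x)|^2$. It therefore suffices to show $|\nabla_v\alpha(x)|^2\le 2c_0c_2$ for every unit vector $v$; summing over the $n$ basis directions then gives the factor $n$.

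For fixed unit $v$, I will let $\gamma(s)=\exp_x(sv)$, $s\in\mathbb R$. This geodesic is defined for all $s$ by completeness. Along $\gamma$ I will take a parallel tensor $\beta(s)$ of the same type as $\alpha$, with $|\beta|\equiv1$ and $\beta(0)=\nabla_v\alpha(x)/|\nabla_v\alpha(x)|$; if $\nabla_v\alpha(x)=0$ there is nothing to prove. I will then consider
$$f(s)=\langle\alpha(\gamma(s)),\beta(s)\rangle.$$
Since $\beta$ is parallel and $\gamma$ is a geodesic, differentiating gives
$$f'(s)=\langle\nabla_{\dot\gamma}\alpha,\beta\rangle,\qquad f''(s)=\langle\nabla^2_{\dot\gamma,\dot\gamma}\alpha,\beta\rangle.$$
Hence $|f|\le c_0$, $|f''|\le c_2$, and $f'(0)=|\nabla_v\alpha(x)|$.

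The main step is the one-dimensional Landau-type inequality. Suppose $a:=f'(0)>0$. For $s\ge0$ we have $f'(s)\ge a-c_2s$. Integrating from $-s_0$ to $s_0$ with $s_0=a/c_2$ gives
$$f(s_0)-f(-s_0)\ge\int_{-s_0}^{s_0}(a-c_2|s|)\,ds=\frac{a^2}{c_2}.$$
The left side is at most $2c_0$, so $a^2\le 2c_0c_2$. If $c_2=0$, then $f$ is affine and bounded on $\mathbb R$, so $a=0$ and the bound still holds.

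Summing over the $n$ directions then gives $|\nabla\alpha|^2\le 2nc_0c_2$. The only point requiring care is that pairing with a parallel unit tensor turns the bounds on the norms $|\alpha|$ and $|\nabla^2\alpha|$ into bounds on $|f|$ and $|f''|$; this holds by Cauchy--Schwarz.
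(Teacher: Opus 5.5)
The paper does not prove this lemma; it quotes it from \cite{LLM26}. There is therefore no in-paper argument to compare yours against, and your proof should be judged on its own.

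On its own terms your argument is correct and complete. Pairing $\alpha$ with a parallel unit tensor along a complete unit-speed geodesic reduces the problem to a function $f$ on $\mathbb{R}$ with $|f|\le c_0$ and $|f''|\le c_2$. The identity $f''=\langle\nabla^2_{\dot\gamma,\dot\gamma}\alpha,\beta\rangle$ correctly uses both $\nabla_{\dot\gamma}\beta=0$ and $\nabla_{\dot\gamma}\dot\gamma=0$. The bound $|\nabla^2_{v,v}\alpha|\le|\nabla^2\alpha|$ for unit $v$ follows by completing $v$ to an orthonormal frame.

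The two-sided integration over $[-s_0,s_0]$ gives the sharp constant $2$ on the line; a one-sided Taylor estimate would only give $4$. Summing the directional bound over an orthonormal frame then gives exactly the factor $n$. So you recover the stated constant $2nc_0c_2$ exactly.

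One small fix in presentation: you state $f'(s)\ge a-c_2s$ only for $s\ge0$, but then integrate $a-c_2|s|$ over a symmetric interval. The bound for $s<0$ follows the same way from $|f'(s)-f'(0)|\le c_2|s|$, and you should say so explicitly.
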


\begin{lemma}\label{noncollapse}
Let $(M^7,\varphi(t))_{t\in[0,T)}$ be a solution of
\eqref{eq:flow} on a compact manifold, where $0<T<\infty$. If the scalar curvature satisfies
\begin{align}\label{eq:scalar-assumption}
    \sup_{M\times[0,T)}(-R)\leq K<\infty,
\end{align}
then there exist numbers
$t_\ast\in(0,T)$, $\rho_0>0$, and $\kappa>0$, depending only on the
initial flow, $K$, and $T$, such that, for every $t\in[t_\ast,T)$,
\begin{equation}
  \Vol_{g(t)}B_{g(t)}(x,r)\geq\kappa r^7
\end{equation}
for all $x\in M$ and all $0<r\leq\rho_0$.
\end{lemma}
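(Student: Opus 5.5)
The plan is to imitate the proof of non-collapsing along Ricci flow with bounded scalar curvature, with Perelman's entropy replaced by a suitable monotone quantity. Here $-R=|\mathbf T|^2\le K$, so the torsion itself is uniformly bounded. Rather than a full entropy, I will work with the volume and a local comparison argument. From \eqref{eq:volume-evolution} and the bound $0\le -R\le K$,
\[
\Vol_{g(0)}(M)\le \Vol_{g(t)}(M)\le e^{\frac23KT}\Vol_{g(0)}(M),
\]
so the total volume is uniformly controlled above and below on $[0,T)$. The main tool will be the monotonicity of a Perelman-type $\mu$-functional adapted to the Laplacian flow. Writing $\partial_t g=2h$ with $h=-\Ric-\frac13|\mathbf T|^2g-2\widehat{\mathbf T}$, the flow is Ricci flow plus a perturbation $S:=-\frac13|\mathbf T|^2g-2\widehat{\mathbf T}$. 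This perturbation is bounded, $|S|\le CK$, because $|\mathbf T|^2\le K$.

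The key step is an almost-monotonicity formula for $\mathcal W(g,f,\tau)=\int_M[\tau(R+|\nabla f|^2)+f-7](4\pi\tau)^{-7/2}e^{-f}\dd V$. I will run it along the flow with $\tau=T_0-t$ and $f$ evolving by the conjugate heat equation adapted to $\partial_tg=2h$, namely $\partial_t f=-\Delta f+|\nabla f|^2-\operatorname{tr}h+\frac7{2\tau}$. Differentiating gives Perelman's square term
\[
2\tau\int|\Ric+\nabla^2 f-\tfrac{1}{2\tau}g|^2 u\,\dd V,
\]
plus error terms linear in $S$, $\nabla S$ and $\nabla^2 S$ contracted against $u=(4\pi\tau)^{-7/2}e^{-f}$ and its derivatives. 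After integrating by parts, and using $\Ric$ expressed through $\nabla\mathbf T$ and $\mathbf T*\mathbf T$ together with the Bianchi identity, I expect the errors to be bounded by $CK(\mathcal W+C)$. The terms involving $\nabla\mathbf T$ should pair with the square term, so that Cauchy--Schwarz absorbs them into it. This would yield $\frac{d}{dt}\mu(g(t),\tau)\ge -CK(|\mu|+1)$, and hence a lower bound $\mu(g(t),\tau)\ge -C(T,K,g(0))$ for $\tau\le 2T$.

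Given the entropy lower bound, I will follow Perelman's non-collapsing argument. Suppose a ball $B_{g(t)}(x,r)$ with $r\le\rho_0$ has volume $\le\kappa r^7$. I then test $\mu(g(t),r^2)$ with a cutoff $e^{-f/2}=\phi(d(x,\cdot)/r)$. Perelman's argument uses $R\le r^{-2}$ on the ball; here I only need the weaker fact $|R|\le K\le r^{-2}$, which holds for $\rho_0\le K^{-1/2}$, and this comes from the hypothesis \eqref{eq:scalar-assumption}. The relative volume ratio $\Vol B(x,r)/\Vol B(x,r/2)$ also needs a bound. As usual I will obtain it by choosing a scale $r$ where the ratio is controlled, descending from $\rho_0$ by a doubling argument. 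This test gives $\mu\le \log\kappa+C$, which contradicts the entropy lower bound when $\kappa$ is small. The time $t_\ast$ only serves to make $\tau=T_0-t$ comparable to $T$. It can be any fixed positive time, and the estimate is uniform up to $T$ because the constants depend only on $K$, $T$ and $g(0)$.

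The main obstacle is the almost-monotonicity step. The bound $|\mathbf T|^2\le K$ controls $S$ but not $\nabla S$ or $\nabla^2S$, which involve $\nabla\mathbf T$ and $\nabla^2\mathbf T$. Every error term containing such derivatives must therefore be integrated by parts onto $u$ or $f$ and absorbed into the Perelman square term or the $|\nabla f|^2$ term. If this absorption fails, my fallback is a reduced-volume or heat-kernel argument. That approach only needs $\operatorname{tr}h$ and $h$ to be bounded from one side: $\operatorname{tr}h=\frac23|\mathbf T|^2\in[0,\frac23K]$, and $h\ge-\Ric-CK$. It would then give a Gaussian lower bound for the conjugate heat kernel, which implies $\kappa$-noncollapsing.
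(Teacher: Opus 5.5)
Your overall strategy is sound, and it is in substance what the paper's proof rests on. The paper writes the metric evolution as $\partial_t g=-2\Ric+E$ with $E=-\frac23|\mathbf T|^2g-4\widehat{\mathbf T}$ (your $2S$), and notes $|E|\le CK$ because $|\mathbf T|^2=-R\le K$. It then simply quotes Chen's $\kappa$-noncollapsing theorem for Ricci flow perturbed by a bounded tensor, relative to an upper bound $R\le r^{-2}$ on the ball. That hypothesis is automatic since $R=-|\mathbf T|^2\le 0$, so your restriction $\rho_0\le K^{-1/2}$ is not needed.

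You are in effect proving the cited theorem yourself. That costs more than a citation but makes the lemma self-contained. As written, however, the one nontrivial step, almost-monotonicity of $\mathcal W$, is only an expectation. The mechanism you sketch is not what closes it: rewriting $\Ric$ through $\nabla\mathbf T$, Bianchi identities, Cauchy--Schwarz on $\nabla\mathbf T$-terms, a Gronwall bound in $|\mu|$, and the heat-kernel fallback are all unnecessary. There is also a sign error. Since $\partial_t\dd V=(\operatorname{tr}h)\dd V$, the conjugate heat equation is $\partial_t u=-\Delta u-(\operatorname{tr}h)u$, i.e. $\partial_t f=-\Delta f+|\nabla f|^2+\operatorname{tr}h+\frac{7}{2\tau}$. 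With your sign, $\frac{d}{dt}\int_M u\,\dd V=2\int_M(\operatorname{tr}h)u\,\dd V\neq 0$.

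The missing observation is that derivatives of $S$ never survive. In Perelman's first variation formula for $\mathcal W$, the metric variation $v$ enters only in two places:
\begin{itemize}
\item through $-\tau\int_M\langle v,P\rangle u\,\dd V$, where $P=\Ric+\nabla^2 f-\frac{1}{2\tau}g$;
\item through the variation of the measure $u\,\dd V$.
\end{itemize}
The second derivatives of $v$ coming from $\delta R$ are integrated onto $u$. Now subtract the variation generated by the Lie derivative along $\nabla f$, which leaves $\mathcal W$ unchanged. For the corrected conjugate heat flow this makes the measure variation vanish. With $v=-2\Ric+2S$ and $\delta\tau=-1$ one then obtains the exact identity
\begin{equation*}
\frac{d}{dt}\mathcal W=2\tau\int_M|P|^2u\,\dd V-2\tau\int_M\langle S,P\rangle u\,\dd V\geq-\frac{\tau}{2}\sup_M|S|^2\geq-C\tau K^2 .
\end{equation*}
Only $|S|\le CK$ is used; no control of $\nabla\mathbf T$ and no $G_2$ identity is needed. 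Flowing the conjugate heat equation backward from a minimizer gives
\begin{equation*}
\mu(g(t),\tau)\geq\mu(g(0),\tau+t)-CK^2(\tau+t)^2,
\end{equation*}
which is bounded below uniformly for $t<T$ and $\tau\le\rho_0^2$. With this identity and the corrected sign, your test-function argument goes through: pick the scale by halving and use $r^2\sup_B R\le 0$. This gives the volume bound for every $t\in[0,T)$, so any $t_\ast$ works.
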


\begin{proof}
By \eqref{eq:scalar-torsion} and \eqref{eq:scalar-assumption},
$|T|^2\leq K$, so the error tensor 
$$E=-\frac{2}{3}|\mathbf{T}|^{2}g_{ij}-4\widehat{\mathbf{T}}_{ij}$$ in
\eqref{eq:metric-flow-Chen} is uniformly bounded.  Chen's theorem gives
$\kappa$-non-collapsing relative to an upper scalar-curvature bound on
a fixed scale.  Since $R\leq0$, the required condition
$R\leq r^{-2}$ on a ball of radius $r$ is automatic.
\end{proof}
Now, we can prove the main theorem when $p>\frac{7}{2}$.
\begin{theorem}\label{3.4}
Let $(M^7,\varphi(t))_{t\in[0,T)}$ be a solution of
\eqref{eq:flow} on a compact manifold, where $0<T<\infty$. For some $p>7/2$,  assume
that the scalar curvature and the $L^{p}$ norm of $W_{77}$ satisfy
$$
  \sup_{M\times[0,T)}(-R)\leq K<\infty,
$$
and
$$
  \sup_{0\leq t<T}\int_M |W_{77}(t)|^p\,\dd V_{g(t)}\leq A<\infty.
$$
Then $\varphi(t)$ extends smoothly past time $T$.
\end{theorem}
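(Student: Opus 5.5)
We argue by contradiction with a blow-up. Suppose the flow cannot be extended past $T$. By the Lotay--Wei extension theorem \cite{LW17}, $\Lambda$ must be unbounded as $t\to T$. Choose $(x_i,t_i)$ with $t_i\to T$ and
\[
Q_i:=\Lambda(x_i,t_i)\ge \tfrac12\sup_{M\times[0,t_i]}\Lambda\to\infty .
\]
We rescale by
\[
\varphi_i(s)=Q_i^{3/2}\varphi(t_i+Q_i^{-1}s),\qquad g_i(s)=Q_i\,g(t_i+Q_i^{-1}s),\qquad s\in[-Q_it_i,0].
\]
These are again Laplacian flows, with $\Lambda_i\le 2$ and $\Lambda_i(x_i,0)=1$. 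Lemma \ref{Shi} then gives uniform bounds on all derivatives of $\Rm$ and $\nabla\mathbf T$ on $[-1,0]$, say. Lemma \ref{noncollapse} is scale invariant on scales up to $\rho_0\sqrt{Q_i}\to\infty$, so we get uniform $\kappa$-noncollapsing at every scale. Together these give a Cheeger--Gromov--Hamilton compactness for the Laplacian flow (as in \cite{LW17}), and we extract a complete limit flow $(M_\infty,\varphi_\infty(s),x_\infty)$ for $s\in(-1,0]$ with $\Lambda_\infty(x_\infty,0)=1$.

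Next we read off the limit's properties. Since $|\mathbf T|^2=-R\le K$, the rescaled torsion satisfies $|\mathbf T_i|^2\le K/Q_i\to0$. Hence $\mathbf T_\infty\equiv0$, so $\varphi_\infty$ is torsion-free and the limit is a static Ricci-flat $G_2$ manifold. Then $\Ric_\infty=0$ and $E_\infty=0$. For the Weyl pieces, \eqref{eq:W64-bound} gives $W_{64}^\infty=0$, because $\nabla\mathbf T_\infty=0$. Likewise $\Ric^W_\infty$ is controlled by Ricci and torsion, so \eqref{eq:W27-norm} gives $W_{27}^\infty=0$. Thus $\Rm_\infty=W_{77}^\infty$. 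Now use the $L^p$ bound, which scales as
\[
\int_M|W_{77}(g_i)|^p\dd V_{g_i}=Q_i^{\frac72-p}\int_M|W_{77}|^p\dd V_g\le AQ_i^{\frac72-p}\to0,
\]
since $p>7/2$. Smooth convergence on compact sets gives $\int_{B}|W_{77}^\infty|^p=0$ on every ball, so $W_{77}^\infty\equiv0$. Hence $\Rm_\infty\equiv0$ and $\nabla\mathbf T_\infty\equiv0$. This contradicts $\Lambda_\infty(x_\infty,0)=1$, which finishes the argument.

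The point I expect to be the main obstacle is the compactness step, and in particular showing that the normalization $\Lambda_i(x_i,0)=1$ survives. We need smooth convergence of both $\Rm_i$ and $\nabla\mathbf T_i$ at the base point, plus injectivity radius control. Injectivity radius follows from the bounded curvature and noncollapsing, via Cheeger--Gromov--Taylor. Derivative control of $\varphi_i$ relative to $g_i$ follows from Lemma \ref{Shi}, and Lemma \ref{3.2} is used to interpolate intermediate derivatives such as $\nabla^2\mathbf T$ from the bounds on $\mathbf T$ and higher derivatives. A secondary subtlety is the uniform-in-$i$ version of Lemma \ref{noncollapse}. It applies once $t_i\ge t_\ast$, and the scale $\rho_0\sqrt{Q_i}$ diverges, which gives noncollapsing on all scales in the limit.
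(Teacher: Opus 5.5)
Your proposal is correct and follows essentially the paper's route: blow up at (near-)maximal $\Lambda$, use the scale-invariant noncollapsing of Lemma \ref{noncollapse} plus Lotay--Wei compactness, observe that $|\mathbf T_i|^2\le K/Q_i\to0$ forces a torsion-free limit with $\Rm_\infty=W_{77}^\infty$, and kill $W_{77}^\infty$ by the subcritical scaling $Q_i^{7/2-p}\to0$. The differences are cosmetic. You obtain $\nabla\mathbf T_\infty=0$ from $\mathbf T_i\to0$ together with smooth convergence. The paper instead uses Lemma \ref{3.2} to get $\sup|\nabla\mathbf T^{(i)}(0)|\le CQ_i^{-1/4}$, and that is the lemma's actual role; it is not needed for bounding $\nabla^2\mathbf T$, which Lemma \ref{Shi} already controls.
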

\begin{proof}
We prove this theorem by contradiction.
Suppose that \(T<\infty\) is the maximal existence time. Since bounded Riemann curvature and $\nabla\mathbf{T}$ give extension, which means
$$\limsup_{t\rightarrow T}\max_{M}\left(|{\rm Rm}|+|\nabla\mathbf{T}|\right)=+\infty.$$
There exists a sequence of times $t_i \rightarrow T$ and points $x_i \in M$ such that
\begin{align}
Q_i := \Lambda(x_i, t_i) = \max_{M \times [0, t_i]} \left(|{\rm Rm}|^{2}+|\nabla\mathbf{T}|^{2}\right)^{\frac{1}{2}} \longrightarrow \infty.\notag
\end{align}
For large $i$, define on $[-1, 0]\subset[-t_{i}Q_{i},0]$
\begin{equation}
\label{eq:rescaled-flow}
\varphi^{(i)}(s) = Q_{i}^{3/2} \varphi\!\left( t_i + \frac{s}{Q_{i}} \right), \qquad
g^{(i)}(s) = Q_{i}\, g\!\left( t_i + \frac{s}{Q_{i}} \right).
\end{equation}
Then
\begin{align}\label{eq:rescaled-Rm}
|\Lambda^{(i)}| \le 1 \quad \text{on } M \times [-1, 0], \qquad
|\Lambda^{(i)}|(x_i, 0) = 1.
\end{align}
Since the Laplacian flow is invariant under the scaling in \eqref{eq:rescaled-flow}, Lemma \ref{Shi} yields
\begin{equation}
\label{eq:rescaled-nabla2T}
\sup_M \left(|(\nabla^{(i)})^{m}{\rm Rm}^{(i)}|(0)+|(\nabla^{(i)})^{m+1} \mathbf{T}^{(i)}|(0) \right)\le C_{m},\quad m\geq 0
\end{equation}
with a positive constant $C_{m}$. 

From the assumption of scalar curvature, for $x\in M$, Lemma \ref{noncollapse} gives
\begin{align}\label{volume}
   \Vol_{g^{(i)}}B_{g^{(i)}}(x,r)\geq\kappa r^7, \qquad 0<r\leq\rho_0\sqrt{Q_i}. 
\end{align}
From \eqref{eq:rescaled-Rm}, we know that the Riemann curvature satisfies
$$|{\rm Rm}^{(i)}(x,0)|\leq \sup_{M\times[-1,0]}|\Lambda^{(i)}(x,t)|\leq 1.$$
Then by \eqref{volume} and Theorem 5.42 in \cite{CLN06}, we have a uniform injectivity radius estimate 
$${\rm inj}(M,g^{(i)}(0),x_{i})\geq c$$
with constant $c>0$. Applying Theorem 1.5 in \cite{LW17}, we obtain a subsequence of $(M,\varphi^{(i)}(t),x_{i}) $ converging to a limit $(M_{\infty},\varphi^{(\infty)}(t),x_{\infty})$ with $|\Lambda^{(\infty)}(x_{\infty},0)|=1$.

The bounded scalar curvature gives
\begin{equation}
\label{eq:rescaled-T}
\sup_{M \times [-1,0]} |\mathbf{T}^{(i)}|^2 \le \frac{K}{Q_i}.
\end{equation}
Applying Lemma \ref{3.2} at time zero gives
\begin{equation}
\label{eq:rescaled-nablaT-vanish}
\sup_{M}|\nabla^{(i)} \mathbf{T}^{(i)}(0)| \le C\, Q_i^{-1/4} \longrightarrow 0,
\end{equation}
where $C$ is a uniformly constant. It follows that
\begin{equation}
\label{eq:rescaled-ric-vanish}
\sup_{M}|{{\rm Ric}}^{(i)}(0)| \longrightarrow 0,
\end{equation}
 Then Equations \eqref{eq:W27-norm} and \eqref{eq:W64-bound} imply
\begin{equation}
\label{eq:rescaled-W27-W64-vanish}
\sup_{M}\left(|W^{(i)}_{27}(0)| + |W^{(i)}_{64}(0)|\right) \longrightarrow 0.
\end{equation}
From the assumption of $L^{p}$ norm of $W_{77}$, for $p>\frac{7}{2}$, we have
\begin{align}
    \int_M|W_{77}^{(i)}|^p\,\dd V_{g_{i}}
  =Q_{i}^{\frac{7}{2}-p}
  \int_M\left|W_{77}\left(t_{i}+\frac{s}{Q_{i}}\right)\right|^p\,\dd V_{g(t_{i}+\frac{s}{Q_{i}})}\leq AQ_{i}^{\frac{7}{2}-p}\longrightarrow 0\notag
\end{align}
as $i\rightarrow\infty$. Let $\Omega \subset M_{\infty}$ be a relatively compact neighbourhood
of $x_\infty$, and let $F_i:\Omega\to M$ be the convergence
embeddings. Since the projections defining $W_{77}$ depend
smoothly on the metric and the positive three-form,
$$F_i^{\ast}W_{77}^{(i)}\longrightarrow W_{77}^{(\infty)}$$
smoothly on $\Omega$. Hence
$$\int_\Omega |W_{77}^{(\infty)}|^p\,dV_{g^{(\infty)}}=\lim_{i\to\infty}\int_\Omega |F_i^{\ast}W_{77}^{(i)}|^p\,dV_{F_i^{\ast}g^{(i)}}=0.$$
Thus $W_{77}^{(\infty)}=0$ on $\Omega$.

 Letting $i\rightarrow\infty$ and from the decomposition of Riemann curvature \eqref{eq:Riemann-G2-decomposition}, we get 
 $$|{\rm Rm}^{(\infty)}|(x_{\infty},0)=0,$$ 
which contradicts 
$$|{\rm Rm}^{(\infty)}|(x_{\infty},0)=|\Lambda^{(\infty)}|(x_{\infty},0)=1.$$
Then we prove the theorem.
\end{proof}

\subsection{Critical case: $p=\frac{7}{2}$}\label{section3.2}

We record the torsion-free specialization of the compactness theorem of
Theorem 1.10 in \cite{LMZ26}.
\begin{lemma}\label{thm:LMZ-special}
Let $(N_i^7,\varphi_i,h_i,p_i)$ be complete torsion-free $G_2$-manifolds.
Assume that there is $\kappa>0$ such that
\begin{align}
  \Vol_{h_i}B_{h_i}(x,r)\geq\kappa r^7
  \qquad
  \text{for all }x\in N_i\text{ and }0<r\leq1,\notag
\end{align}
and that, for every $r>0$,
\begin{equation}\label{eq:LMZ-energy}
  \int_{B_{h_i}(p_i,r)}|\Rm(h_i)|^{7/2}\,\dd V_{h_i}
  \leq E(r)<+\infty.
\end{equation}
Then a subsequence converges in the pointed $C^\infty$ topology to a smooth torsion-free $G_2$-manifold.
\end{lemma}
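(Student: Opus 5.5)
The plan is to obtain the lemma by specializing Theorem 1.10 of \cite{LMZ26} to the case of vanishing torsion, and to supply the elliptic argument behind that theorem only where the specialization needs it. For a torsion-free $G_2$-structure we have $d\varphi_i=0$ and $\mathbf{T}_i\equiv0$, so $\nabla\varphi_i=0$. By the Ricci formula of Section~\ref{subsection2.4}, $\Ric(h_i)=0$ and $R(h_i)=0$. Each $(N_i,\varphi_i,h_i)$ is therefore a stationary solution of \eqref{eq:flow}, since $\Delta_{\varphi_i}\varphi_i=dd^\ast\varphi_i=0$. Consequently every hypothesis in \cite{LMZ26} involving $\mathbf{T}$, $\nabla\mathbf{T}$ or a lower bound on $R$ holds trivially with constant $0$. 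What remains are the non-collapsing bound and the local $L^{7/2}$ curvature bound \eqref{eq:LMZ-energy}, and these are exactly our assumptions. The first step is thus simply to check this hypothesis list and invoke the theorem.

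To make the specialization transparent, I would also sketch the mechanism directly. Since $\Ric(h_i)=0$, the second Bianchi identity gives an elliptic equation $\Delta\Rm=\Rm\ast\Rm$. Moser iteration, using the Sobolev inequality that follows from $\Vol B(x,r)\ge\kappa r^7$ together with Ricci-flatness, then yields an $\varepsilon$-regularity statement: there is $\varepsilon=\varepsilon(\kappa)>0$ such that
\[
\int_{B(x,r)}|\Rm|^{7/2}\le\varepsilon
\quad\Longrightarrow\quad
\sup_{B(x,r/2)}r^{2+k}|\nabla^k\Rm|\le C_k .
\]
By \eqref{eq:LMZ-energy}, on each ball $B(p_i,r)$ the energy can concentrate at only a bounded number $N(r)\le E(r)/\varepsilon$ of points. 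Away from these points the $\varepsilon$-regularity estimate, non-collapsing and Cheeger's lemma give uniform injectivity radius bounds, and hence $C^\infty$ Cheeger--Gromov convergence.

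Along the convergence diffeomorphisms $F_i$, the forms $F_i^\ast\varphi_i$ are parallel and have constant pointwise norm $\sqrt7$. Their covariant derivatives are therefore controlled by those of the metrics, so Arzel\`a--Ascoli gives a convergent subsequence. The limit three-form is positive, because positivity is an open condition and the norm is preserved. It is also parallel, so the limit is torsion-free $G_2$ and its metric is $h_\infty$.

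The main obstacle is ruling out the concentration points, since a priori they could produce orbifold-type singularities in the limit. This is where the torsion-free (Ricci-flat, and indeed $G_2$-holonomy) structure enters through the argument of \cite{LMZ26}. At a concentration point one blows up at the curvature scale to obtain a complete Ricci-flat $G_2$ manifold with finite $L^{7/2}$ energy and Euclidean volume growth. Such a manifold is asymptotically locally Euclidean with asymptotic cone $\mathbb{R}^7/\Gamma$, where $\Gamma\subset G_2$ acts freely on $S^6$. I would try to show that $\Gamma$ must be trivial, so that the blow-up is flat by volume rigidity (Bishop--Gromov). This contradicts the normalization at the curvature scale, hence no concentration occurs. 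I would rely on \cite{LMZ26} for this step rather than reprove it, which completes the pointed $C^\infty$ convergence claimed in Lemma~\ref{thm:LMZ-special}.
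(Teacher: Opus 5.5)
\paragraph{Gap in the citation route.} Your main route is the paper's. Your ``stationary solution'' remark is the paper's observation that each torsion-free $\varphi_i$ is a steady gradient $G_2$-soliton with $f_i\equiv0$ and $\lambda_i=0$, to which Theorem~1.10 of \cite{LMZ26} is applied. The gap is in your hypothesis check, which misses the one condition that is not among the lemma's assumptions. That theorem is stated for the class $\cM(\Lambda_0,F_0,\underline{\nu_0})$ of Definition~1.3 in \cite{LMZ26}, and membership requires a uniform lower bound on the local entropy. That bound is neither a torsion condition nor one of your two assumptions, so it is not true that what remains is ``exactly our assumptions.'' The paper's proof consists precisely of deriving it. Ricci-flatness plus uniform non-collapsing of unit balls give uniformly bounded local Sobolev constants. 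These give a uniform local logarithmic Sobolev inequality, and hence $\nu\geq\underline{\nu_0}$. You use the Sobolev inequality only inside your Moser-iteration sketch and never connect it to the entropy, so as written the invocation of \cite{LMZ26} is not justified.

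\paragraph{Your direct sketch closes without the citation.} The step you defer to \cite{LMZ26} is elementary, and with it your sketch becomes a complete proof that does not need that paper. The bubble carries a limiting torsion-free $G_2$-structure, so it is oriented and $\Gamma\subset SO(7)$. For $A\in SO(7)$ one has $\det(A-I)=\det A\,\det(I-A^{T})=\det(I-A)=-\det(A-I)$, so $A$ fixes a unit vector. Hence only the trivial group acts freely on $S^6$. The asymptotic volume ratio is then Euclidean, and Bishop--Gromov rigidity makes the bubble flat, contradicting $|\Rm|=1$ at its base point.

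Run this through a standard point-picking blow-up. The $L^{7/2}$ energy is scale invariant, and non-collapsing for $r\le1$ becomes Euclidean volume growth in the limit. You then get locally uniform curvature bounds directly, so the finite-concentration bookkeeping is not needed.

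This route uses only Ricci-flatness, orientability, $\varepsilon$-regularity, the Bando--Kasue--Nakajima ALE theorem and Bishop--Gromov; its cost is writing out the blow-up. The paper's route is shorter but rests entirely on the soliton compactness theorem plus the entropy check.

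\paragraph{Positivity of the limit form.} Positivity of the limit three-form does not follow from openness of the positive cone and $|\varphi_i|=\sqrt7$. Limits can leave an open set, and having norm $\sqrt7$ does not force positivity. Instead, pass to the limit in the algebraic identity expressing $g_i$ and $\dd V_{g_i}$ through $\mathrm{B}_{\varphi_i}$. Alternatively, use that the $G_2$-structures compatible with a fixed metric and orientation form the compact fibre $SO(7)/G_2$. Parallelism passes to the limit because $\nabla^{h_i}\varphi_i=0$.
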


\begin{proof}
From the assumption, $\varphi_{i}$ is a torsion-free $G_{2}$ structure, thus
$$\Delta_{\varphi_{i}}\varphi_{i}=d^{\ast}_{h_{i}}d\varphi_{i}+dd^{\ast}_{h_{i}}\varphi_{i}=0,$$
$\varphi_{i}$ can be viewed as a complete steady gradient $G_2$-soliton with potential $f_i\equiv0$ and soliton constant $\lambda_i=0$.  

Since a torsion-free $G_{2}$ structure means $h_i$ is Ricci-flat, and the unit balls are uniformly noncollapsed, the local Sobolev constants are uniformly bounded. The corresponding local logarithmic Sobolev inequality yields a uniform lower bound for the local entropy. Thus the sequence lies in a fixed class
$\cM(\Lambda_0,F_0,\underline{\nu_0})$ of Definition 1.3 in \cite{LMZ26}. Note that $f_i\equiv0$, \eqref{eq:LMZ-energy} and Theorem 1.10 in \cite{LMZ26} show that the subsequence converges in the pointed $C^\infty$ topology to a smooth torsion-free $G_2$-manifold.
\end{proof}
Then we prove the main theorem for $p=\frac{7}{2}$.
\begin{theorem}\label{3.6}
Let $(M^7,\varphi(t))_{t\in[0,T)}$ be a solution of
\eqref{eq:flow} on a compact manifold, where $0<T<\infty$. Suppose
that the scalar curvature and the $L^{\frac{7}{2}}$ norm of $W_{77}$ satisfy
$$
  \sup_{M\times[0,T)}(-R)\leq K<\infty,
$$
and
$$
  \sup_{0\leq t<T}\int_M |W_{77}(t)|^\frac{7}{2}\,\dd V_{g(t)}\leq A<\infty.
$$
Then $\varphi(t)$ extends smoothly past time $T$.
\end{theorem}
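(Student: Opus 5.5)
We argue by contradiction, following the blow-up scheme of Theorem \ref{3.4} up to the point where that proof uses $p>\frac72$. Suppose $T$ is maximal and choose $(x_i,t_i)$ with $Q_i=\Lambda(x_i,t_i)=\max_{M\times[0,t_i]}\Lambda\to\infty$. Rescale as in \eqref{eq:rescaled-flow}. As before we get:
\begin{itemize}
\item the normalization \eqref{eq:rescaled-Rm} and the Shi bounds \eqref{eq:rescaled-nabla2T};
\item noncollapsing \eqref{volume} on scales $r\le\rho_0\sqrt{Q_i}$;
\item the injectivity radius bound at $x_i$;
\item a pointed smooth limit $(M_\infty,\varphi^{(\infty)}(t),x_\infty)$ via Theorem 1.5 of \cite{LW17}, with $|\Lambda^{(\infty)}|(x_\infty,0)=1$.
\end{itemize}
Estimates \eqref{eq:rescaled-T}, \eqref{eq:rescaled-nablaT-vanish} and \eqref{eq:rescaled-ric-vanish} show that $\mathbf{T}^{(\infty)}(0)\equiv0$. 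So $\varphi^{(\infty)}(0)$ is a complete torsion-free $G_2$-structure and $g^{(\infty)}(0)$ is Ricci-flat. By \eqref{eq:rescaled-W27-W64-vanish}, we also have $W_{27}^{(\infty)}=W_{64}^{(\infty)}=0$. Hence $\Rm^{(\infty)}=W^{(\infty)}_{77}$ at time $0$, and $|W^{(\infty)}_{77}|(x_\infty)=1$.

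The difference from the supercritical case is that for $p=\frac72$ the quantity $\int|W_{77}|^{7/2}$ is scale invariant. It no longer tends to zero, so the limit need not be flat. We only get, by Fatou and the smooth local convergence,
\[
\int_{M_\infty}|\Rm^{(\infty)}|^{7/2}\,\dd V_{g^{(\infty)}(0)}=\int_{M_\infty}|W_{77}^{(\infty)}|^{7/2}\,\dd V_{g^{(\infty)}(0)}\le A .
\]
We also get Euclidean volume growth $\Vol B(x,r)\ge\kappa r^7$ for \emph{all} $r>0$, since $\rho_0\sqrt{Q_i}\to\infty$. The limit $(M_\infty,g^{(\infty)}(0))$ is therefore a complete, nonflat, Ricci-flat $7$-manifold with holonomy in $G_2$, maximal volume growth and finite $L^{7/2}$ curvature.

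The plan is to derive a contradiction from this. I would first invoke the $\varepsilon$-regularity for Ricci-flat (or critical) metrics: finite $L^{n/2}$ energy plus noncollapsing forces $|\Rm|(x)=o(d(x,x_\infty)^{-2})$. In fact it forces $|\Rm|\le C\,r^{-2}\big(\int_{B(x,r)\setminus\cdots}|\Rm|^{7/2}\big)^{2/7}$ on annuli, and the tail energy tends to zero. So the limit is asymptotically locally Euclidean, with tangent cone at infinity $\mathbb R^7/\Gamma$.

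To rule out a nonflat limit, the natural tool is the extra time direction. The flow $\varphi^{(\infty)}(s)$ exists for $s\in[-1,0]$, and its energy bound holds at every time, with torsion identically zero for all $s$ by \eqref{eq:rescaled-T}. Hence the limit is a static flow, and it carries the same bound at each time, which alone is not contradictory. So the contradiction must come from the energy picture.

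My first attempt would instead rescale differently. I would choose the blow-up points via point-picking so that energy concentrates. I would then apply Lemma \ref{thm:LMZ-special} to a sequence of further blow-downs or blow-ups of the limit, or directly to the sequence $g^{(i)}(0)$ after noting that $\mathbf T^{(i)}\to0$ makes them asymptotically torsion-free. The goal is to show that a curvature-$1$ point costs a definite amount $\varepsilon_0$ of energy. Iterating this (bubble tree) at infinitely many scales between $Q_i^{-1/2}$ and $1$ would then contradict the bound $A$, in the style of Sesum-type or Yang--Wang $L^{n/2}$ arguments.

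The main obstacle is precisely this step. The critical exponent alone does not exclude an ALE Ricci-flat $G_2$ bubble. Closing it would need either a gap theorem (small $\int|W_{77}|^{7/2}$ implies flat, via the Kato inequality and Sobolev inequality on a Ricci-flat noncollapsed manifold) together with a small-energy hypothesis, or a genuine energy-decay argument along the flow. I would try the gap argument via the elliptic equation $\Delta\Rm=\Rm*\Rm$ first.
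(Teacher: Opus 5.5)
You reproduce the first half of the paper's argument correctly. The blow-up limit at time $0$ is a complete torsion-free (hence Ricci-flat) $G_2$-manifold with $\Rm^{(\infty)}=W^{(\infty)}_{77}$, $|\Rm^{(\infty)}|(x_\infty)=1$, $\Vol B(y,r)\ge\kappa r^7$ for all $r>0$, and $\int_{M_\infty}|\Rm^{(\infty)}|^{7/2}\le A$. However, the decisive step is never carried out: you do not show that such a limit must be flat. So the proposal is not yet a proof, and none of your candidate mechanisms supplies the missing step.
\begin{itemize}
\item The time direction gives nothing, since the limit is static, as you note.
\item A bubble-tree count only bounds the number of $\varepsilon_0$-bubbles by roughly $A/\varepsilon_0$. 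It cannot rule out a single bubble.
\item A gap theorem needs $\int|W_{77}|^{7/2}$ to be small, which is not assumed.
\end{itemize}
Your claim that ``the critical exponent alone does not exclude an ALE Ricci-flat $G_2$ bubble'' is in fact false in dimension $7$, and that is the missing idea.

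You even name the tool the paper uses, namely Lemma \ref{thm:LMZ-special} applied to blow-downs of the limit, but you aim it at energy quantization instead of the direct contradiction. The paper's argument runs as follows. If $|\Rm^{(\infty)}|(x)>0$, set $g_j=a_j^{-2}g^{(\infty)}$ and $\varphi_j=a_j^{-3}\varphi^{(\infty)}$ with $a_j\to\infty$. Noncollapsing and the $L^{7/2}$ energy are scale invariant, so the lemma gives pointed smooth subconvergence at $x$. This is incompatible with $|\Rm(g_j)|_{g_j}(x)=a_j^2|\Rm^{(\infty)}|(x)\to\infty$.

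Alternatively, your own ALE observation finishes the argument without that compactness theorem:
\begin{itemize}
\item The tangent cone at infinity is $\mathbb R^7/\Gamma$, with $\Gamma$ acting freely on $S^6$.
\item $M_\infty$ carries a $G_2$-structure, so it is orientable. Large annuli are diffeomorphic to $(1,2)\times S^6/\Gamma$, so $\Gamma\subset SO(7)$.
\item In odd dimension every $A\in SO(7)$ satisfies $\det(A-I)=0$, so it fixes a unit vector and cannot act freely on $S^6$. Hence $\Gamma=\{1\}$.
\item The asymptotic volume ratio of $M_\infty$ is therefore $1$. Bishop--Gromov rigidity for $\Ric\ge 0$ then forces $M_\infty\cong\mathbb R^7$, contradicting $|\Rm^{(\infty)}|(x_\infty)=1$.
\end{itemize}
The same parity argument shows that nonflat ALE bubbles, and hence orbifold points, cannot occur for torsion-free $G_2$-manifolds. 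This is consistent with the smooth, rather than orbifold, convergence asserted in Lemma \ref{thm:LMZ-special}.
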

\begin{proof}
We also prove this theorem by contradiction. Repeating the blow-up construction in the proof of Theorem \ref{3.4}, suppose that \(T<\infty\) is the maximal existence time, then there exists a sequence of times $t_i \rightarrow T$ and points $x_i \in M$ such that
\begin{equation}
Q_i := \Lambda(x_i, t_i) = \max_{M \times [0, t_i]} \left(|{\rm Rm}|^{2}+|\nabla\mathbf{T}|^{2}\right)^{\frac{1}{2}} \longrightarrow \infty.
\end{equation}
For large $i$, define on $[-1, 0]\subset[-t_{i}Q_{i},0]$
\begin{equation}
\varphi^{(i)}(s) = Q_{i}^{3/2} \varphi\!\left( t_i + \frac{s}{Q_{i}} \right), \qquad
g^{(i)}(s) = Q_{i}\, g\!\left( t_i + \frac{s}{Q_{i}} \right).
\end{equation}
By the blow-up construction above, we obtain a subsequence of $(M,\varphi^{(i)}(t),x_{i}) $ converging to a limit $(M_{\infty},\varphi^{(\infty)},x_{\infty})$ with 
$$|\Lambda^{(\infty)}(x_{\infty},0)|=1,\quad |{\rm Rm}^{(\infty)}|(x,0)=|W_{77}^{(\infty)}|(x,0),\quad |\mathbf{T}^{(\infty)}(x,0)|=0.$$
For any relatively compact domain $\Omega$ in the limit, critical scale invariance gives
\begin{align}
    \int_\Omega|W_{77}^{(i)}|^\frac{7}{2}\,\dd V_{g^{(i)}}
  &\leq \int_M |W_{77}^{(i)}|^\frac{7}{2}\,\dd V_{g^{(i)}}\\
  &=Q_{i}^{\frac{7}{2}-\frac{7}{2}}
  \int_M\left|W_{77}\left(t_{i}+\frac{s}{Q_{i}}\right)\right|^{\frac{7}{2}}\,\dd V_{g^{(t_{i}+s/Q_{i})}}\notag\\
  &\leq A<\infty,\notag
\end{align}
First pass to the limit on each compact region and then exhaust $M_{\infty}$. This yield
$$\int_{M_{\infty}}|{\rm Rm}^{(\infty)}|^\frac{7}{2}\,\dd V_{g^{(\infty)}}=\int_{M_{\infty}}|W_{77}^{(\infty)}|^\frac{7}{2}\,\dd V_{g^{(\infty)}}\leq A<\infty.$$

Next, we show $(M_{\infty},g^{(\infty)})$ is flat. We also prove it by contradiction. Fix $x\in M_{\infty}$.  Suppose that $|\Rm^{(\infty)}|(x)>0$.  Choose $a_j\to\infty$
and form the blow-down sequence
$$
  g_j:=a_j^{-2}g^{(\infty)},
  \qquad
  \varphi_j:=a_j^{-3}\varphi^{(\infty)}.
$$
Since $\varphi^{(\infty)}$ is a torsion-free $G_{2}$-structure, $(M_{\infty},\varphi_j,g_j)$ can be viewed as a complete torsion-free steady gradient $G_2$-soliton with zero potential.  Non-collapsing \eqref{volume}
$${\rm Vol}_{g^{(\infty)}}B_{g^{(\infty)}}(y,r)\geq \kappa r^{7}\quad \text{for all}\  y\in M_{\infty},\ r>0$$
and critical energy are scale invariant, and for every fixed $r>0$,
\begin{align}\label{eq:blowdown-energy}
  \int_{B_{g_j}(x,r)}|\Rm(g_j)|^{\frac{7}{2}}\,\dd V_{g_j}
  &=\int_{B_{g^{\infty}}(x,a_jr)}|\Rm^{(\infty)}|^{\frac{7}{2}}\,\dd V_{g^{\infty}}\\
  &\leq\int_{M_{\infty}}|\Rm(g^{\infty})|^{\frac{7}{2}}\,\dd V_{g^{(\infty)}}.\notag
\end{align}
Lemma \ref{thm:LMZ-special} gives a pointed smoothly convergent
subsequence, so the basepoint curvature must remain bounded.  But
\begin{equation}\label{eq:basepoint-blowup}
  |\Rm(g_j)|_{g_j}(x)
  =a_j^2|\Rm^{(\infty)}|_{g^{(\infty)}}(x)\longrightarrow\infty.
\end{equation}
This contradicts the pointed smooth convergence, which yields a uniform curvature bound at the basepoint.  Hence $|\Rm^{(\infty)}|(x)=0$, and $x$ was arbitrary, which implies $(M_{\infty},g^{(\infty)})$ is flat.

By the blow-up construction above, we also know 
$$|{\rm Rm}^{(\infty)}|(x_{\infty},0)=|\Lambda^{(\infty)}|(x_{\infty},0)=1.$$
Thus, we get a contradiction. We prove Theorem \ref{3.6}.
\end{proof}

\section{Mixed spacetime bounds of $W_{77}$} \label{section4}
In this section, we establish extension criteria in terms of mixed
space-time norms of $W_{77}$. This criterion allows the spatial \(L^p\) norm of \(W_{77}\) to grow in time, provided that its mixed space-time norm remains finite.
\begin{lemma}\label{lem4.1}
Let $(M^7,\varphi(t))_{t\in[0,T)}$ be a solution of \eqref{eq:flow} on a compact manifold, where $0<T<\infty$. Suppose that the scalar curvature  satisfies
$$\sup_{M\times[0,T)}(-R)\leq K<\infty,$$
for any $t_{0}\in (0,T)$, if there exists $Q$ such that
$Q\geq K$ and 
$$|{\rm Rm}(x,t)|\leq Q\quad \text{on}\ M\times[t_{0}-2Q^{-1},t_{0}],$$
then for any $m\geq 0$, we have
\begin{align}\label{4.1}
    \sup_{M}|\nabla^{m}\mathbf{T}|(t_{0})\leq C_{m+1}\sqrt{K}Q^{\frac{m}{2}},\quad\quad \sup_{M}|\partial_{t}|{\rm Rm}||(t_{0})\leq C\sqrt{K}Q^{\frac{3}{2}}.
\end{align}
where $C_{k+2}$ and $C$ are some uniform constants independent of $K$ and $Q$.
\end{lemma}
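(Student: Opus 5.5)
We will combine the Shi-type estimate (Lemma \ref{Shi}) on a time interval of length comparable to $Q^{-1}$ with the interpolation inequality of Lemma \ref{3.2}. The scalar curvature bound supplies the zeroth-order information $|\mathbf{T}|\le\sqrt{K}$. First we turn the hypothesis $|\Rm|\le Q$ into a bound on $\Lambda$. Lemma \ref{Shi} needs $\Lambda\le Q$, so we also need $|\nabla\mathbf{T}|\lesssim Q$ on $[t_0-2Q^{-1},t_0]$. By Lemma \ref{lemma2.4}, $|\nabla\mathbf{T}|\le C(|\Rm|+|\mathbf{T}|^2)\le C(Q+K)\le 2CQ$, using $Q\ge K$. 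Hence $\Lambda\le C'Q$ on $M\times[t_0-2Q^{-1},t_0]$.

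Next we apply Lemma \ref{Shi} to the flow translated to start at $t_0-Q^{-1}/C'$. After shrinking the interval by a fixed factor, this gives $|\nabla^k\Rm|+|\nabla^{k+1}\mathbf{T}|\le C_kQ^{1+k/2}$ at time $t_0$, since the elapsed time is $\sim Q^{-1}$. Then we obtain the bounds on $\nabla^m\mathbf{T}$ by interpolation. The case $m=0$ is $|\mathbf{T}|\le\sqrt{K}$. For $m\ge1$ we apply Lemma \ref{3.2} at time $t_0$ to $\alpha=\nabla^{m-1}\mathbf{T}$, which needs sup bounds on $\nabla^{m-1}\mathbf{T}$ and $\nabla^{m+1}\mathbf{T}$. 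We proceed by an interpolation in $m$ (a Gagliardo--Nirenberg-type induction). The cleanest way is to apply Lemma \ref{3.2} repeatedly with the ansatz $|\nabla^j\mathbf{T}|\le C\sqrt{K}Q^{j/2}$. For $j=1$: $|\nabla\mathbf{T}|^2\le 14\sqrt{K}\,|\nabla^2\mathbf{T}|\le C\sqrt{K}Q^{3/2}$, which only gives $K^{1/4}Q^{3/4}$. This is weaker than claimed, so a single step is not enough.

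The better route is to iterate the interpolation between $\nabla^0\mathbf{T}$ and $\nabla^{N}\mathbf{T}$ for large $N$. Shi gives $|\nabla^N\mathbf{T}|\le CQ^{(N+1)/2}$, and interpolation yields
\[
|\nabla^m\mathbf{T}|\le C K^{\frac{1}{2}(1-\frac{m}{N})}Q^{\frac{N+1}{2}\cdot\frac{m}{N}}.
\]
The exponents only approach $\sqrt{K}Q^{m/2}$ as $N\to\infty$, not exactly. So I expect the \emph{main obstacle} to be getting exactly $\sqrt{K}$. My first attempt would use the parabolic structure instead: $\mathbf{T}$ satisfies a heat-type equation $\partial_t\mathbf{T}=\Delta\mathbf{T}+\Rm*\mathbf{T}+\dots$, which is linear in $\mathbf{T}$ with coefficients bounded by $Q$ (as in \cite{LW17}). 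The linear Bernstein--Shi argument then gives $|\nabla^m\mathbf{T}|^2\le C\sup|\mathbf{T}|^2\,(t-t_0+2Q^{-1})^{-m}$. Concretely, I would consider quantities $F=t^m|\nabla^m\mathbf{T}|^2+\sum_{j<m}c_jt^j|\nabla^j\mathbf{T}|^2$ on an interval of length $Q^{-1}$ and apply the maximum principle. Here one uses the Shi bounds on $\nabla^k\Rm$ for the coefficients and $|\mathbf{T}|^2\le K\le Q$ to absorb the quadratic terms. This yields $|\nabla^m\mathbf{T}|(t_0)\le C_{m+1}\sqrt{K}Q^{m/2}$.

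For the time derivative of $|\Rm|$, we differentiate the Riemann tensor using the metric evolution \eqref{eq:metric-flow-Chen}: $\partial_t\Rm=\nabla^2h+\Rm*h$. Here $h=-\Ric+\mathbf{T}*\mathbf{T}$, and by Lemma \ref{lemma2.4}, $\Ric=\nabla\mathbf{T}*\varphi+\mathbf{T}*\mathbf{T}$. Hence
\[
\partial_t\Rm=\nabla^3\mathbf{T}*\varphi+(\text{lower-order terms in }\mathbf{T},\nabla\varphi=\mathbf{T}*\psi,\Rm).
\]
The leading term is bounded by $\sqrt{K}Q^{3/2}$ from the first estimate. The remaining terms are of the form $\nabla^2(\mathbf{T}*\mathbf{T})$, $\nabla^2\mathbf{T}*\mathbf{T}$, and $\Rm*\nabla\mathbf{T}$, each bounded by $\sqrt{K}Q^{3/2}$ using $K\le Q$. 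Finally, $|\partial_t|\Rm||\le|\partial_t\Rm|+C|h||\Rm|$ (Kato, where $\Rm\neq0$). Also $|h||\Rm|\le C\sqrt{K}Q^{1/2}\cdot Q$. This gives the second inequality in \eqref{4.1}.
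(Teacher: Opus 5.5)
Your proposal is correct and follows essentially the paper's argument. You use Lemma \ref{lemma2.4} to turn $|\Rm|\le Q$ into $\Lambda\le CQ$, apply Shi's estimates on a time interval of length comparable to $Q^{-1}$, run a weighted Bernstein quantity $\sum_j c_j t^j|\nabla^j\mathbf{T}|^2$ through the maximum principle starting from $|\mathbf{T}|^2\le K$, and finish with $\partial_t\Rm=\nabla^2h+\Rm*h$ and $\Ric=\nabla\mathbf{T}*\varphi+\mathbf{T}*\mathbf{T}$. The paper first rescales to $\tilde g(s)=Qg(t_0+sQ^{-1})$ and works on $[-1,0]$ with weight $\theta=s+1$, whereas you stay at scale $Q^{-1}$; this difference is only cosmetic, and you were right to drop the interpolation detour through Lemma \ref{3.2}, which cannot give the sharp $\sqrt{K}$ factor.
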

\begin{proof}
    From the assumption of Riemann curvature, for $s\in[-2,0]$, we define $\tilde{\varphi}$ and $\tilde{g}$ as follows
    \begin{align}\label{4.2}
        \tilde{\varphi}(s)=Q^{\frac{3}{2}}\varphi(t_{0}+sQ^{-1}),\quad \tilde{g}(s)=Qg(t_{0}+sQ^{-1}).
    \end{align}
    Thus, we have
    $$|\widetilde{{\rm Rm}}|(x,s)\leq 1,\quad |\tilde{\mathbf{T}}|^{2}(x,s)\leq \frac{K}{Q},\quad \text{on}\ M\times[-2,0].$$
    Since the Laplacian flow is invariant under the scaling in \eqref{4.2}, by Lemma \ref{lemma2.4} and Shi-type estimate in Lemma \ref{Shi}, for any $k\in\mathbb{N}^{+}$, we have
    $$|\tilde{\nabla}^{k}\widetilde{\rm Rm}|(x,s)+|\tilde{\nabla}^{k+1}\tilde{\mathbf{T}}|(x,s)\leq C_{m},\quad \text{on}\ M\times[-1,0].$$
    From (4.23) in \cite{LW17}, we have
    \begin{align}
        (\partial_{s}-\tilde{\triangle})|\tilde{\nabla}^{k+1}\tilde{\mathbf{T}}|^{2}&=-2|\tilde{\nabla}^{k+2}\tilde{\mathbf{T}}|^{2}+\sum_{i=0}^{k+1}\tilde{\nabla}^{k+1}\tilde{\mathbf{T}}\tilde{\ast}\tilde{\nabla}^{k+1-i}\tilde{\mathbf{T}}\tilde{\ast}\tilde{\nabla}^{i}(\widetilde{\rm Rm}+\tilde{\mathbf{T}}\tilde{\ast}\tilde{\mathbf{T}})\notag\\
        &\quad+\sum_{i=0}^{k+1}\tilde{\nabla}^{k+1}\tilde{\mathbf{T}}\tilde{\ast}\tilde{\nabla}^{k+1-i}(\widetilde{\rm Rm}\tilde{\ast}\tilde{\mathbf{T}})\tilde{\ast} \tilde{\nabla}^{i}\tilde{\psi}\notag\\
        &\quad+\sum_{i=0}^{k+1}\tilde{\nabla}^{k+1}\tilde{\mathbf{T}}\tilde{\ast}\tilde{\nabla}^{k+1-i}(\tilde{\nabla}\tilde{\mathbf{T}}\tilde{\ast}\tilde{\mathbf{T}})\tilde{\ast} \tilde{\nabla}^{i}\tilde{\varphi}\notag\\
        &\leq -|\tilde{\nabla}^{k+2}\tilde{\mathbf{T}}|^{2}+C_{k+2}\sum_{i=0}^{k+1}|\tilde{\nabla}^{i}\tilde{\mathbf{T}}|^{2}\notag.
    \end{align}
    Letting $\theta=s+1\in[0,1]$, and define
$$F_{k+1}(s)=\sum_{i=0}^{k+1}b_{i}\theta^{i}(s)|\tilde{\nabla}^{i}\tilde{\mathbf{T}}|^{2}(s)$$
Set $b_{k+1}=1$ and choose the coefficients backwards so that $b_{i-1}\geq 2ib_{i}$. The term $ib_{i}\theta^{i-1}|\tilde{\nabla}^{i}\tilde{\mathbf{T}}|^{2}$ arising from differentiating the time weight is then absorbed by the negative term at the preceding level. Since $0\leq\theta\leq 1$, the remaining terms are bounded by $C_{k+2}F_{k+1}$, and hence
$$(\partial_{s}-\tilde{\triangle})F_{k+1}\leq C_{k+2}F_{k+1}.$$
When $\theta=0$, one has $F_{k+1}(-1)=b_{0}|\tilde{\mathbf{T}}|^{2}\leq b_{0}\frac{K}{Q}$. Then by the maximum principle, we get $F_{k+1}\leq C\frac{K}{Q}$, which means
$$|\tilde{\nabla}^{k+1}\tilde{\mathbf{T}}|\leq C_{k+2}\frac{\sqrt{K}}{\sqrt{Q}}\quad \text{at}\ s=0.$$
Thus, scaling yields
\begin{align}\label{4.3}
    \sup_{M}|\nabla^{k+1}\mathbf{T}|(t_{0})\leq C_{k+2}\sqrt{K}Q^{\frac{k+1}{2}}.
\end{align}

Since ${\rm Ric}=\nabla\mathbf{T}\ast\varphi+\mathbf{T}\ast\mathbf{T}$, then $\partial_{t}g$ satisfies
$$\partial_{t}g=\nabla\mathbf{T}\ast\varphi+\mathbf{T}\ast\mathbf{T}.$$
From \eqref{4.3}, it follows that
$$|\nabla^{2}(\partial_{t}g)|\leq C\sqrt{K}Q^{\frac{3}{2}}.$$
The general formula (2.66) in \cite{CLN06} for Riemann curvature gives
\begin{align}
    |\partial_{t}|{\rm Rm}||\leq C(|\nabla^{2}(\partial_{t}g)|+|{\rm Rm}|\cdot |\partial_{t}g|)\leq C\sqrt{K}Q^{\frac{3}{2}}.\notag
\end{align}
Thus, we prove this Lemma.
\end{proof}

\begin{theorem}\label{thm4.2}
Let $(M^7,\varphi(t))_{t\in[0,T)}$ be a solution of
\eqref{eq:flow} on a compact manifold, where $0<T<\infty$. 
Let $\frac{7}{2}< p<\infty$ and $1\leq q<\infty$ satisfy
$$\frac{1}{q}+\frac{7}{p}\leq 2.$$
Suppose that the scalar curvature and $W_{77}$ satisfy
$$
  \sup_{M\times[0,T)}(-R)\leq K<\infty,
$$
and  
$$
  \int_{0}^{T}\left(\int_{M}|W_{77}|^{p}\dd V_{g(t)}\right)^{\frac{q}{p}}\dd t \leq A<\infty.
$$
Then $\varphi(t)$ extends smoothly past time $T$.
\end{theorem}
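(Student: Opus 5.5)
We argue by contradiction, as in Theorem \ref{3.4}, but the blow-up is now chosen so that the mixed norm can be localized to a short time interval. Suppose $T$ is maximal, so that $\limsup_{t\to T}\max_M|\Rm|=\infty$; by \cite{LW17} bounded curvature gives extension, and Lemma \ref{lem4.1} controls $\nabla\mathbf{T}$ once $\Rm$ is bounded. Pick $t_i\to T$ and $x_i$ with
\[
Q_i:=|\Rm|(x_i,t_i)=\max_{M\times[0,t_i]}|\Rm|\to\infty .
\]
Then $|\Rm|\le Q_i$ on $M\times[t_i-2Q_i^{-1},t_i]$ and $Q_i\ge K$ for large $i$. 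Lemma \ref{lem4.1} gives $\sup_M|\partial_t|\Rm||\le C\sqrt K Q_i^{3/2}$ at every time of $[t_i-Q_i^{-1},t_i]$, since the hypothesis of that lemma holds there as well. Consequently $|\Rm|(x_i,t)\ge Q_i/2$ for $t\in[t_i-\delta_i Q_i^{-1},t_i]$, where $\delta_i\sim Q_i^{-1/2}/\sqrt K$.

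This is too short, so I would instead work in the rescaled flow $g^{(i)}(s)=Q_ig(t_i+s/Q_i)$ on $[-1,0]$. There $|\Rm^{(i)}|\le1$ and $|\mathbf{T}^{(i)}|^2\le K/Q_i$, and rescaling Lemma \ref{lem4.1} gives $|\partial_s|\Rm^{(i)}||\le C\sqrt{K}Q_i^{-1/2}\to0$. Hence $|\Rm^{(i)}|(x_i,s)\ge 1/2$ for all $s\in[-1,0]$ once $i$ is large. Moreover, as in \eqref{eq:rescaled-nablaT-vanish}--\eqref{eq:rescaled-W27-W64-vanish} applied at each $s\in[-1/2,0]$, we have $\sup_M(|W^{(i)}_{27}|+|W^{(i)}_{64}|+|E^{(i)}|+|R^{(i)}|)\to0$ uniformly in $s$. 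So $|W^{(i)}_{77}|(x_i,s)\ge 1/4$ for all $s\in[-1/2,0]$.

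Next I would turn this pointwise lower bound into an $L^p$ lower bound. The Shi estimates (Lemma \ref{Shi}) give $|\nabla^{(i)}\Rm^{(i)}|\le C$ on $[-1/2,0]$, and $\varphi^{(i)}$ and the projection to $W_{77}$ have bounded derivatives, so $|\nabla W^{(i)}_{77}|\le C$. Hence $|W^{(i)}_{77}|\ge1/8$ on $B_{g^{(i)}(s)}(x_i,c)$ for a uniform $c>0$. The non-collapsing of Lemma \ref{noncollapse}, in rescaled form \eqref{volume}, gives $\Vol B_{g^{(i)}(s)}(x_i,c)\ge\kappa c^7$. Therefore
\[
\int_M|W^{(i)}_{77}|^p\,\dd V_{g^{(i)}(s)}\ge c_0>0,\qquad s\in[-1/2,0].
\]
Undoing the scaling, $\|W_{77}(t)\|_{L^p(g(t))}=Q_i^{1-\frac{7}{2p}}\|W^{(i)}_{77}(s)\|_{L^p}$ with $t=t_i+s/Q_i$. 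Hence
\[
\int_{t_i-\frac{1}{2Q_i}}^{t_i}\|W_{77}\|_{L^p}^q\,\dd t\ \ge\ \frac{c_0^{q/p}}{2}\,Q_i^{\,q-\frac{7q}{2p}-1}.
\]
The exponent is $q\bigl(1-\frac{7}{2p}-\frac1q\bigr)$, and the hypothesis $\frac1q+\frac7p\le2$ is exactly the statement that $\frac{1}{q}+\frac{7}{2p}\le 1$, i.e. that this exponent is nonnegative (note that $\frac{7}{2p}\le\frac7p$ does not give it). So each window contributes at least $c_1>0$.

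Choosing the $t_i$ inductively, with $t_{i+1}-\frac{1}{2Q_{i+1}}>t_i$, makes the windows disjoint; this is possible because $Q_{i+1}$ can be taken arbitrarily large with $t_{i+1}$ arbitrarily close to $T$. Then $\int_0^T\|W_{77}\|_{L^p}^q\,\dd t\ge\sum_i c_1=\infty$, contradicting the bound $A$.

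The main obstacle is the uniform persistence of a definite amount of $W_{77}$ over a rescaled time interval of unit length. This requires the hypothesis of Lemma \ref{lem4.1} on $[s-2,s]$ for every $s\in[-1,0]$, which forces the maximal point to be chosen with room, i.e. with $|\Rm|\le Q_i$ on $[t_i-3Q_i^{-1},t_i]$; the choice $Q_i=\max_{M\times[0,t_i]}|\Rm|$ gives this. It also requires the uniform vanishing of the other curvature components, which comes from Lemma \ref{3.2} together with $|\mathbf{T}^{(i)}|^2\le K/Q_i$. I would handle both by applying the estimates at each fixed $s$ rather than only at $s=0$.
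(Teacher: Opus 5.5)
There is a genuine gap in the final exponent count. You claim that $\frac1q+\frac7p\le 2$ is equivalent to $\frac1q+\frac{7}{2p}\le 1$, but halving the hypothesis gives $\frac{1}{2q}+\frac{7}{2p}\le 1$. Your exponent $q\bigl(1-\frac{7}{2p}-\frac1q\bigr)$ is nonnegative only under the strictly stronger condition $\frac2q+\frac7p\le 2$. In the diagonal case $p=q=4$, which the paper singles out, your exponent is $-\frac12$. Each window then contributes only at least $c\,Q_i^{-1/2}$, and these lower bounds have finite sum for, e.g., geometrically growing $Q_i$, so no contradiction follows. Likewise $p=7$, $1\le q<2$ is not covered. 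As written, your argument proves the statement only in the parabolic range $\frac2q+\frac7p\le 2$.

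The missing ingredient is the gain already built into Lemma \ref{lem4.1}. The bound $|\partial_t|\Rm||\le C\sqrt K Q^{3/2}$ beats the parabolic rate $Q^2$ by the factor $\sqrt{K/Q}$, so $|\Rm|(x_i,\cdot)$ stays comparable to $Q_i$ for an absolute time of order $\delta(KQ_i)^{-1/2}\gg Q_i^{-1}$. Your first computation inverted this: $C\sqrt K Q_i^{3/2}\cdot\delta_iQ_i^{-1}\le Q_i/2$ gives $\delta_i\sim Q_i^{1/2}/\sqrt K$, so the window is long, not short. Equivalently, in the rescaled flow $|\partial_s|\Rm^{(i)}||\le C\sqrt K Q_i^{-1/2}$ lets you go back to $s\sim-\delta\sqrt{Q_i/K}$, and restricting to $s\in[-1,0]$ discards exactly a factor $Q_i^{1/2}$.

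Your own choice $Q_i=\max_{M\times[0,t_i]}|\Rm|$ gives $|\Rm|\le Q_i$ on all of $M\times[0,t_i]$. Hence Lemma \ref{lem4.1} and the Shi bound $|\nabla\Rm|\le CQ_i^{3/2}$ hold at every time of $[t_i-\delta(KQ_i)^{-1/2},t_i]$ for large $i$, not merely on $[t_i-Q_i^{-1},t_i]$. This is what the paper does. It takes $Q_j=2^jQ_0$, lets $t_j$ be the first time $\max_M|\Rm|$ reaches $Q_j$, and sets $I_j=[t_j-\delta(KQ_j)^{-1/2},t_j]$. On $B_{g(t_j)}(x_j,\rho Q_j^{-1/2})\times I_j$ one has $|W_{77}|\ge Q_j/2$. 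Therefore $\int_{I_j}\|W_{77}\|_{L^p}^q\,\dd t\ge c\,K^{-1/2}Q_j^{\,q-\frac{7q}{2p}-\frac12}$, and this exponent is nonnegative precisely when $\frac1q+\frac7p\le 2$. Disjointness of the $I_j$ follows from $t_j-t_{j-1}\ge(2C\sqrt{KQ_j})^{-1}$.

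Once you lengthen your window this way, the rest of your argument goes through essentially unchanged. That includes the spatial localization at each fixed time via Shi and non-collapsing, the fact that the $W_{27}$, $W_{64}$ and traceless Ricci parts are $O(\sqrt{KQ_i}+K)$, and your inductive choice of disjoint windows.
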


\begin{proof}
    We also prove this theorem by contradiction. Suppose that \(T<\infty\) is the maximal existence time, then we have
$$\limsup_{t\rightarrow T}\max_{M}|{\rm Rm}|=+\infty.$$ 
We fix a sufficiently large constant $Q_{0}$, and define
$$m(t):=\max_{M}|{\rm Rm}|(t),\quad\quad Q_{j}:=2^{j}Q_{0}.$$
Let $t_{j}$ be the first time at which $m(t)$ reaches $Q_{j}$, and choose $x_{j}\in M$ so that 
$$|{\rm Rm}|(x_{j},t_{j})=Q_{j}.$$
Thus, $t_{i}\rightarrow T$ as $i\rightarrow \infty$ and
$$|{\rm Rm}|(x,t)\leq Q_{j}\quad \text{on}\ M\times[0,t_{j}].$$

After discarding finitely many indices, we may assume that
$t_{j-1}\ge 2Q_j^{-1}$.
For every $s\in[t_{j-1},t_j]$, the interval
$[s-2Q_j^{-1},s]$ is contained in $[0,t_j]$.
Lemma \ref{lem4.1} applies at each such time $s$, with the same
constants. It follows that
\begin{align}
    \frac{Q_{j}}{2}=m(t_{j})-m(t_{j-1})\leq C\sqrt{K}Q_{j}^{\frac{3}{2}}(t_{j}-t_{j-1}),\notag
\end{align}
which implies
\begin{align}
    t_{j}-t_{j-1}\geq \frac{1}{2C\sqrt{K}\sqrt{Q_{j}}}\notag.
\end{align}
 Choose $\delta\leq \frac{1}{8C}$, and define $I_{j}$ as
 $$I_{j}:=\left[t_{j}-\frac{\delta}{\sqrt{K}\sqrt{Q_{j}}},t_{j}\right],$$
 we get $I_{j}\subset [t_{j-1},t_{j}]$ and the intervals are pairwise disjoint.

 Since $|\nabla{\rm Rm}|(t_{j})\leq CQ_{j}^{\frac{3}{2}}$, choose $\rho\leq \frac{1}{8C}$ and integrating along minimizing $g(t_{j})$-geodesics gives
 $$|{\rm Rm}|(x,t_{j})\geq \frac{7}{8}Q_{j}\quad \text on\  B_{g(t_{j})}\left(x_{j},\rho Q_{j}^{-\frac{1}{2}}\right).$$
 Integrating $\eqref{4.1}$ at the same fixed spatial point then yields
 $$|{\rm Rm}|(x,t)\geq \frac{3}{4}Q_{j}\quad \text on\  B_{g(t_{j})}\left(x_{j},\rho Q_{j}^{-\frac{1}{2}}\right)\times I_{j}.$$
 From Equations \eqref{eq:W27-norm} and \eqref{eq:W64-bound}, we get
 $$|W_{27}|+W_{64}|\leq C(|\nabla\mathbf{T}|+|\mathbf{T}|^{2})\leq C(\sqrt{K}Q_{j}^{\frac{1}{2}}+K)\quad \text{on}\  B_{g(t_{j})}\left(x_{j},\rho Q_{j}^{-\frac{1}{2}}\right)\times I_{j},$$
similar to Ricci curvature and scalar curvature, we get
 \begin{align}\label{equ}
      \frac{Q_{j}}{2}\leq|W_{77}|(x,t)\leq Q_{j}\quad \text{on}\  B_{g(t_{j})}\left(x_{j},\rho Q_{j}^{-\frac{1}{2}}\right)\times I_{j}.
 \end{align}

For convenience, we use $B_{j}$ instead of $B_{g(t_{j})}\left(x_{j},\rho Q_{j}^{-\frac{1}{2}}\right)$. From Lemma \ref{noncollapse}, we get
$${\rm Vol}_{g(t_{j})}B_{j}\geq \kappa\rho^{7}Q_{j}^{-\frac{7}{2}}.$$
Using $\partial_{t}\dd V_{g(t)}=\frac{2}{3}|\mathbf{T}|^{2}\dd V_{g(t)}$, for any $t\in I_{j}$, we have
$$\dd V_{g(t)}\geq e^{\frac{-2K(t_{j}-t)}{3}}\dd V_{g(t_{j})}.$$
Since $K|I_{j}|=\delta\frac{\sqrt{K}}{\sqrt{Q_{j}}}\leq\delta$, it follows that
\begin{align}\label{ball}
    {\rm Vol}_{g(t)}B_{j}\geq e^{-\frac{2\delta}{3}}\kappa\rho^{7}Q_{j}^{-\frac{7}{2}}.
\end{align}

Now, we use the above results to get a contradiction. For $p<\infty$ and $t\in I_{j}$, we have
\begin{align}
    \int_{M}|W_{77}|^{p}\dd V_{g(t)}\geq \int_{B_{j}}|W_{77}|^{p}\dd V_{g(t)}\geq \left(\frac{Q_{j}}{2}\right)^{p}e^{-\frac{2\delta}{3}}\kappa\rho^{7}Q_{j}^{-\frac{7}{2}}=2^{-p}e^{-\frac{2\delta}{3}}\kappa\rho^{7}Q_{j}^{p-\frac{7}{2}}\notag.
\end{align}
Consequently,
\begin{align}
    \int_{I_{j}}\left(\int_{M}|W_{77}|^{p}\dd V_{g(t)}\right)^{\frac{q}{p}}dt\geq c_{p,q}Q_{j}^{q-\frac{7q}{2p}}|I_{j}|=\frac{c_{p,q}}{\sqrt{K}}Q_{j}^{q-\frac{7q}{2p}-\frac{1}{2}},\notag
\end{align}
where $c_{p,q}=\delta(2^{-p}e^{-\frac{2\delta}{3}}\kappa\rho^{7})^{\frac{q}{p}}$. The condition $\frac{1}{q}+\frac{7}{p}\leq 2$ implies
$$q-\frac{7q}{2p}-\frac{1}{2}\geq 0.$$
Thus each $I_{j}$ carries a positive amount of energy independent of $j$; under strict inequality, this amount even grows with $j$. Since the intervals $I_{j}$ are pairwise disjoint, we get
\begin{align}
    \int_{0}^{T}\left(\int_{M}|W_{77}|^{p}\dd V_{g(t)}\right)^{\frac{q}{p}}dt\geq \sum_{j\gg1} \int_{I_{j}}\left(\int_{M}|W_{77}|^{p}\dd V_{g(t)}\right)^{\frac{q}{p}}dt=\infty,
\end{align}
which contradicts the assumption.
\end{proof}

\begin{corollary}
    Let $(M^7,\varphi(t))_{t\in[0,T)}$ be a solution of
\eqref{eq:flow} on a compact manifold, where $0<T<\infty$. 
Let $f:[0,\infty)\rightarrow[1,\infty)$ be finite-valued and nondecreasing, with
\begin{align}\label{4.7}
    \int_{1}^{\infty}\frac{\dd s}{sf(s)}=\infty.
\end{align}
Suppose that the scalar curvature and $W_{77}$ satisfy
$$
  \sup_{M\times[0,T)}(-R)\leq K<\infty
$$
with $K>0$ and  
\begin{align}\label{4.8}
    \int_{0}^{T}\int_{M}\frac{|W_{77}|^{4}}{f(|W_{77}|/K)}\dd V_{g(t)}\dd t \leq A<\infty.
\end{align}
Then $\varphi(t)$ extends smoothly past time $T$.
\end{corollary}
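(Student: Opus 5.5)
We argue by contradiction, reusing the dyadic blow-up construction from the proof of Theorem \ref{thm4.2} with $p=q=4$. Suppose $T<\infty$ is maximal. Fix $Q_0$ large and set $Q_j=2^jQ_0$. Let $t_j$ be the first time $\max_M|\Rm|$ reaches $Q_j$, and pick $x_j$ with $|\Rm|(x_j,t_j)=Q_j$. Lemma \ref{lem4.1} applies on $[t_{j-1},t_j]$. Exactly as in that proof, we obtain pairwise disjoint intervals
\[
I_j=\bigl[t_j-\delta K^{-1/2}Q_j^{-1/2},\,t_j\bigr]
\]
and balls $B_j=B_{g(t_j)}(x_j,\rho Q_j^{-1/2})$ with the following properties: $\frac{Q_j}{2}\le|W_{77}|\le Q_j$ on $B_j\times I_j$, and $\Vol_{g(t)}B_j\ge c\,Q_j^{-7/2}$ for $t\in I_j$ by \eqref{ball}.

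The new ingredient is to bound the weighted integrand from below on $B_j\times I_j$. Since $f$ is nondecreasing and $|W_{77}|/K\le Q_j/K$ there, we have $f(|W_{77}|/K)\le f(Q_j/K)$. Hence
\[
\int_{I_j}\int_{B_j}\frac{|W_{77}|^4}{f(|W_{77}|/K)}\dd V_{g(t)}\dd t\ \ge\ \frac{(Q_j/2)^4\,c\,Q_j^{-7/2}\,\delta K^{-1/2}Q_j^{-1/2}}{f(Q_j/K)}\ =\ \frac{c'}{f(Q_j/K)},
\]
where $c'>0$ depends only on $\delta,\rho,\kappa,K$. The powers of $Q_j$ cancel exactly because $p=q=4$ is the critical case $\frac1q+\frac7p=2$. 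By disjointness of the $I_j$, the hypothesis \eqref{4.8} then gives
\[
\sum_j \frac{1}{f(2^jQ_0/K)}\le \frac{A}{c'}<\infty .
\]

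It remains to show that this series diverges, which contradicts \eqref{4.7}. Set $s_j=2^jQ_0/K$ and assume, enlarging $Q_0$ if needed, that $s_0\ge1$. Monotonicity of $f$ gives
\[
\int_{s_j}^{s_{j+1}}\frac{\dd s}{sf(s)}\le \frac{\log 2}{f(s_j)}.
\]
Summing over $j$ shows $\int_{s_0}^\infty \frac{\dd s}{sf(s)}\le \log 2\sum_j f(s_j)^{-1}<\infty$. The integral over $[1,s_0]$ is finite because $f\ge1$, so $\int_1^\infty\frac{\dd s}{sf(s)}<\infty$, contradicting \eqref{4.7}. Hence $T$ is not maximal, and the flow extends.

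The main point to check is that the constants from Lemma \ref{lem4.1} do not depend on $j$. This holds because that lemma requires only $Q\ge K$, which is true for large $j$, and the lemma is the only source of the interval length $|I_j|\sim K^{-1/2}Q_j^{-1/2}$. Consequently the normalization $|W_{77}|/K$ inside $f$ affects only the harmless shift $s_j=2^jQ_0/K$.
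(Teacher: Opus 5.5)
Your proposal is correct and follows the paper's proof essentially step for step. You reuse the intervals $I_j$ and balls $B_j$ from Theorem~\ref{thm4.2} and bound $f(|W_{77}|/K)\le f(Q_j/K)$ by monotonicity, which gives a contribution of order $1/f(Q_j/K)$ per interval since the powers of $Q_j$ cancel at $p=q=4$; you then compare $\sum_j 1/f(2^jQ_0/K)$ with $\int \dd s/(sf(s))$ over dyadic blocks, exactly as the paper does, and you are only slightly more explicit than the paper about the disjointness of the $I_j$ and the finite initial piece $\int_1^{s_0}$.
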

\begin{proof}
    From \eqref{equ} and \eqref{ball}, take the same notions in Theorem \ref{thm4.2}, we have
    $$\int_{I_{j}}\int_{B_{j}}\frac{|W_{77}|^{4}}{f(|W_{77}|/K)}\dd V_{g(t)}\dd t\geq \frac{\delta e^{-\frac{2}{3}\delta}\kappa\rho^{7}}{2^{4}}\cdot\frac{1}{\sqrt{K}f(Q_{j}/K)}.$$
    Let $b=\frac{Q_{0}}{K}>0$. For all sufficiently large $j$, monotonicity gives
    $$\int_{2^{j}b}^{2^{j+1}b}\frac{ds}{sf(s)}\leq \frac{\log 2}{f(2^{j}b)}.$$
    Hence \eqref{4.7} implies
    $$\sum_{j=1}^{\infty}\frac{\log 2}{f(2^{j}b)}=\infty.$$
    which contradicts \eqref{4.8}.
\end{proof}

\textbf{Acknowledgments.}\ \ 
 The first author is supported by the China Postdoctoral Science Foundation (2026M793350). The second author is funded by the Shanghai Institute for Mathematics and Interdisciplinary Sciences (SIMIS) under grant number SIMIS-ID-2025-AD. The third author is partially funded by NSFC grants No.12571091. He acknowledges support from the ICTP through the Associates Programme (2020-2025).

\end{document}